\newtheorem{thm}{Theorem}[section]
\theoremstyle{definition}
\newtheorem{cor}[thm]{Corollary}
\newtheorem{prop}[thm]{Proposition}
\newtheorem{defn}[thm]{Definition}
\newtheorem{lem}[thm]{Lemma}
\newtheorem{rem}[thm]{Remark}
\newtheorem{ex}[thm]{Example}
\numberwithin{equation}{section}
\begin{document}
{\footnotetext{This research was supported with a grant from Farhangian University}
\title[$S$-secondary submodules of a module]{$S$-secondary submodules of a module}
\author{Faranak
 Farshadifar}
\address{Department of Mathematics, Farhangian University, Tehran, Iran.}
\email{f.farshadifar@cfu.ac.ir}

\begin{abstract}
Let $R$ be a commutative ring with identity, $S$ a multiplicatively closed subset of $R$, and $M$ be an $R$-module.
The aim of this paper is to introduce and investigate some properties of the notion of $S$-secondary submodules of $M$ as a generalization of secondary and $S$-second submodules of $M$. 

\end{abstract}

\subjclass[2010]{13C13, 13C05, 13A15, 16D60, 13H15}%
\keywords {Secondary submodule, multiplicatively closed subset, $S$-secondary submodule, $S$-primary submodule}

\maketitle
% ----------------------------------------------------------------
\section{Introduction}
\noindent
Throughout this paper, $R$ will denote a commutative ring with
identity and $\Bbb Z$ will denote the ring of integers.

Consider a non-empty subset $S$ of $R$. We call $S$ a multiplicatively closed subset of $R$ if (i) $0 \not \in S$, (ii) $1 \in S$, and (iii) $s\acute{s} \in S$ for all $s, \acute{s} \in S$ \cite{WK16}. Note that $S = R-P$ is a  multiplicatively closed subset of $R$ for every prime ideal $P$ of $R$.
Let $M$ be an $R$-module. A proper submodule $P$ of $M$ is said to be \emph{prime} if for any $r \in R$ and $m \in M$ with $rm \in P$, we have $m \in P$ or $r \in (P:_RM)$ \cite{Da78}. A non-zero submodule $N$ of $M$ is said to be \emph{second} if for each $a \in R$, the homomorphism $ N \stackrel {a} \rightarrow N$ is either surjective
or zero \cite{Y01}.

Let $S$ be a  multiplicatively closed subset of $R$ and let $P$ be a submodule of an $R$-module $M$ with $(P :_R M) \cap S =\emptyset$. Then the submodule
$P$ is said to be an \emph{$S$-prime submodule} of $M$ if there exists a fixed $s\in S$, and whenever $am \in P$, then $sa \in (P :_R M)$ or
$sm \in P$ for each $a \in R$, $m \in M$ \cite{satk19}. Particularly, an ideal $I$ of $R$ is said to be an \emph{$S$-prime ideal} if $I$ is an $S$-prime submodule of the $R$-module $R$.

In \cite{FF22} F. Farshadifar,  introduced  the notion of $S$-second submodules as a dual notion of $S$-prime submodules and investigated some properties of this class of modules.
Let $S$ be a  multiplicatively closed subset of $R$ and $N$ a submodule of an $R$-module $M$ with $Ann_R(N) \cap S =\emptyset$. Then the submodule
$N$ is said to be an \emph{$S$-second submodule} of $M$ if there exists a fixed $s\in S$, and whenever $rN\subseteq K$, where $r \in R$ and $K$ is a submodule of $M$, implies either that $rsN=0$ or $sN\subseteq K$ \cite{FF22}.

Let $S$ be a  multiplicatively closed subset of $R$ and $M$ be an $R$-module.
The main purpose of this paper is to introduce the notion of $S$-secondary submodules of an $R$-module $M$ as a generalization of secondary submodules of $M$ and
provide some information about this class of submodules. Also, this notion can be regarded as a generalization of  $S$-second submodules.

\section{Main results}
\noindent
Let $M$ be an $R$-module. A proper submodule $N$ of
$M$ is said to be \emph{completely irreducible} if $N=\bigcap _
{i \in I}N_i$, where $ \{ N_i \}_{i \in I}$ is a family of
submodules of $M$, implies that $N=N_i$ for some $i \in I$. It is
easy to see that every submodule of $M$ is an intersection of
completely irreducible submodules of $M$ \cite{FHo06}.

\begin{rem}\label{r2.1}
Let $N$ and $K$ be two submodules of an $R$-module $M$. To prove $N\subseteq K$, it is enough to show that if $L$ is a completely irreducible submodule of $M$ such that $K\subseteq L$, then $N\subseteq L$  \cite{AF101}.
\end{rem}

\begin{thm}\label{t2.3} Let $S$ be a  multiplicatively closed subset of $R$.  For a submodule $N$ of an $R$-module $M$  with $\sqrt{Ann_R(N)} \cap S= \emptyset$ the following statements are equivalent:
\begin{itemize}
 \item [(a)]  There exists a fixed $s \in S$ such that for each $r \in R$, $srN=sN$ or  $(sr)^tN=0$ for some $t \in \Bbb N$;
 \item [(b)]  There exists a fixed $s \in S$ and whenever $rN\subseteq K$, where $r \in R$ and $K$ is a submodule of $M$, implies either that $(rs)^tN=0$  for some $t \in \Bbb N$ or $sN\subseteq K$;
 \item [(c)] There exists a fixed $s \in S$ and whenever $rN\subseteq L$, where $r \in R$ and $L$ is a
completely irreducible submodule of $M$, implies either that $(rs)^tN=0$  for some $t \in \Bbb N$
 or $sN\subseteq L$.
 \item [(d)] There exists a fixed $s \in S$, and $JN\subseteq K$ implies $sJ \subseteq \sqrt{Ann_R(N)}$ or $sN \subseteq K$ for each ideal $J$ of $R$ and submodule $K$ of $M$.
\end{itemize}
\end{thm}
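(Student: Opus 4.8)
The plan is to prove the cycle of implications $(a)\Rightarrow(b)\Rightarrow(c)\Rightarrow(d)\Rightarrow(a)$, fixing throughout the same witness $s\in S$ that appears in $(a)$ and never changing it.

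For $(a)\Rightarrow(b)$, suppose $rN\subseteq K$ with $r\in R$. Apply $(a)$ to this $r$: either $(sr)^tN=0$ for some $t$, in which case we are done, or $srN=sN$. In the latter case, multiplying the inclusion $rN\subseteq K$ by $s$ gives $sN=srN\subseteq K$, which is the desired conclusion. The implication $(b)\Rightarrow(c)$ is immediate since a completely irreducible submodule is in particular a submodule. For $(c)\Rightarrow(d)$, I would argue contrapositively in the style of Remark \ref{r2.1}: assume $JN\subseteq K$ but $sN\not\subseteq K$; I must show $sJ\subseteq\sqrt{Ann_R(N)}$. By Remark \ref{r2.1}, since $sN\not\subseteq K$ there is a completely irreducible submodule $L$ with $K\subseteq L$ and $sN\not\subseteq L$ — more precisely, pick $L\supseteq K$ completely irreducible failing $sN\subseteq L$. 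For each $r\in J$ we have $rN\subseteq JN\subseteq K\subseteq L$, so $(c)$ forces $(rs)^tN=0$ for some $t\in\Bbb N$, i.e. $(rs)^t\in Ann_R(N)$, hence $rs\in\sqrt{Ann_R(N)}$. As this holds for every $r\in J$ and $\sqrt{Ann_R(N)}$ is an ideal, $sJ\subseteq\sqrt{Ann_R(N)}$.

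For $(d)\Rightarrow(a)$, given $r\in R$ apply $(d)$ with the principal ideal $J=Rr$ and the submodule $K=srN$. Since $rN\subseteq srN$ is false in general, the right setup is $K=srN$ and noting $r(sN)=srN\subseteq K$; better, apply $(d)$ to $J=Rr$, $K=srN$: then $rN\subseteq rN$... let me instead take $K=srN$ and use $JN=rN$; we need $rN\subseteq srN$, which need not hold, so the cleaner choice is $J=Rsr$ and $K=srN$, giving $JN=srN=K\subseteq K$, hence by $(d)$ either $s(Rsr)\subseteq\sqrt{Ann_R(N)}$ — so $s^2r\in\sqrt{Ann_R(N)}$, giving $(s^2r)^kN=0$ and after adjusting powers $(sr)^tN=0$ for suitable $t$ — or $sN\subseteq srN$. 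In the second case, combined with the trivial inclusion $srN\subseteq sN$, we get $srN=sN$. This yields $(a)$ (absorbing the extra factor of $s$ into the fixed witness, or noting $s^2\in S$ may be taken as the new fixed element, consistent with the ``there exists a fixed $s$'' phrasing).

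The main obstacle I anticipate is bookkeeping with the powers and the extra factors of $s$ in $(d)\Rightarrow(a)$: one must be careful that ``$(sr)^tN=0$ for some $t$'' is genuinely equivalent to ``$sr\in\sqrt{Ann_R(N)}$'' and that replacing $s$ by $s^2$ (or a higher power) is harmless because any fixed power of $s$ still lies in $S$ and still satisfies the required equalities — in particular one should check that if $s^2rN=s^2N$ then one can also arrange the statement with witness $s$, or simply allow the witness to be $s^2$, since $(a)$ only asserts existence of some fixed element of $S$. A secondary subtlety is confirming the hypothesis $\sqrt{Ann_R(N)}\cap S=\emptyset$ is never violated by these manipulations; it is used implicitly to guarantee the relevant submodules are ``proper enough'' but is not actually needed in the logical equivalences themselves, which go through purely formally.
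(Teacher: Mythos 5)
Your proof is correct; the only point worth tightening is the phrase ``after adjusting powers'' in $(d)\Rightarrow(a)$, which deserves one line: from $s^2r\in\sqrt{Ann_R(N)}$ you get $(sr)^2=(s^2r)r\in\sqrt{Ann_R(N)}$ because the radical is an ideal, hence $sr\in\sqrt{Ann_R(N)}$ and $(sr)^tN=0$ for some $t$; in fact, as your own case analysis shows, the witness never needs to be changed to $s^2$ at all, so the hedging at the end is unnecessary. Your route differs mildly from the paper's: you close a single cycle $(a)\Rightarrow(b)\Rightarrow(c)\Rightarrow(d)\Rightarrow(a)$, proving $(c)\Rightarrow(d)$ with Remark \ref{r2.1} (essentially the paper's $(b)\Rightarrow(d)$ argument, run from the formally weaker hypothesis $(c)$, which is if anything cleaner) and then going from $(d)$ straight back to $(a)$ by the choice $J=Rsr$, $K=srN$ together with the trivial inclusion $srN\subseteq sN$. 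The paper instead proves $(a)\Rightarrow(b)\Rightarrow(c)\Rightarrow(a)$ and separately $(b)\Leftrightarrow(d)$: its $(c)\Rightarrow(a)$ uses Remark \ref{r2.1} to force $sN\subseteq rsN$, and its $(d)\Rightarrow(b)$ uses the principal ideal $J=Ra$. The ingredients are the same (completely irreducible submodules via Remark \ref{r2.1}, a principal-ideal specialization of $(d)$, and $srN\subseteq sN$); your arrangement buys a slightly shorter derivation of $(a)$ from $(d)$ at the cost of the small power bookkeeping noted above, while the paper's split keeps each implication power-free. Your closing observation that the disjointness hypothesis $\sqrt{Ann_R(N)}\cap S=\emptyset$ is not used in the equivalences also matches the paper's proof.
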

\begin{proof}
$(a) \Rightarrow (b)$ and $(b) \Rightarrow (c)$ are clear.

$(c) \Rightarrow (a)$
By part (c), there exists a fixed $s \in S$. Assume that $(sr)^tN\not=0$ for some  $r \in R$ and each $t \in \Bbb N$.
If $rsN \subseteq L$ for some completely irreducible submodule $L$ of $M$, then by assumption, $sN\subseteq
L$. Hence, by Remark \ref{r2.1}, $sN\subseteq rsN$, as required.

$(b)\Rightarrow (d)$
Suppose that  $JN\subseteq K$ for some ideal $J$ of $R$ and submodule $K$ of
$M$. By part (b), there is an $s \in S$ so that $rN\subseteq K$ implies $sr \in \sqrt{Ann_R(N)}$ or  $sN \subseteq K$ for each $r \in R$. Assume that $sN \not \subseteq K$. Then by Remark \ref{r2.1}, there exists a completely irreducible submodule $L$ of $M$ such that $K \subseteq L$ but $sN \not \subseteq L$. Then note that for each $a \in J$, we
have $aN \subseteq L$. By part (b), we can conclude that $sa \in \sqrt{Ann_R(N)}$ and so $sJ \subseteq \sqrt{Ann_R(N)}$.

$(d)\Rightarrow (b)$
Take $a \in R$ and $K$ a submodule of $M$ with $aN \subseteq K$. Now, put $J = Ra$. Then we have $JN \subseteq K$. By assumption, there is a fixed $s \in S$ such that $sJ = Ras \subseteq \sqrt{Ann_R(N)}$ or
$sN \subseteq K$ and so either $sa \in \sqrt{Ann_R(N)}$ or $sN \subseteq K$ as needed.
\end{proof}

\begin{defn}\label{d2.1}
Let $S$ be a  multiplicatively closed subset of $R$ and $N$ be a submodule of an $R$-module $M$ such that $\sqrt{Ann_R(N)} \cap S= \emptyset$.
We say that $N$ is an \textit{$S$-secondary submodule of $M$} if satisfies the equivalent conditions
of Theorem \ref{t2.3}. By an \textit{$S$-secondary module}, we mean
a module which is an $S$-secondary submodule of itself.
\end{defn}

\begin{rem}\label{r2.1}
Let $M$ be an $R$-module and $S$ a  multiplicatively closed subset of $R$. Clearly, every $S$-second submodule of $M$ is an $S$-secondary submodule of $M$. But the converse is not true in general. For example, Consider $\Bbb Z_4$ as an $\Bbb Z$-module. Set $S:=\Bbb Z \setminus 2\Bbb Z$. Then for each $s \in S$, $2\Bbb Z_4=2s\Bbb Z_4 \not=s\Bbb Z_4=\Bbb Z_4$ and $2s\Bbb Z_4 \not=0$ implies that  $\Bbb Z_4$ is not an  $S$-second $\Bbb Z$-module. But  $\Bbb Z_4$ is an  $S$-secondary $\Bbb Z$-module.  Because, if we consider $s=1$, and $n \in \Bbb Z$, then  $\Bbb Z_4=n(1) \Bbb Z_4=(1)\Bbb Z_4=\Bbb Z_4$, where $n \not =2k$ for each $k \in \Bbb N$ and $(n(1))^2\Bbb Z_4=0$, where $n =2k$ for some $k \in \Bbb N$.
\end{rem}

The following lemma is known, but we write it here for the sake of reference.
\begin{lem}\label{l2.1}
Let $M$ be an $R$-module, $S$ a  multiplicatively closed subset of $R$,
and $N$ be a finitely generated submodule of $M$. If $S^{-1}N \subseteq  S^{-1}K$ for a submodule
$K$ of $M$, then there exists an $s \in S$ such that $sN \subseteq K$.
\end{lem}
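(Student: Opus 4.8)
The plan is to exploit the finite generation of $N$ to turn an inclusion of localizations into a statement valid after multiplication by a single element of $S$. Write $N = Rn_1 + \cdots + Rn_k$ for finitely many generators $n_1, \dots, n_k \in M$. For each $i$ the image $n_i/1$ lies in $S^{-1}N \subseteq S^{-1}K$, so I can express $n_i/1 = k_i/s_i$ in $S^{-1}M$ for suitable $k_i \in K$ and $s_i \in S$.

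Next I would unwind the definition of equality in the module of fractions: the equation $n_i/1 = k_i/s_i$ supplies an element $t_i \in S$ with $t_i(s_i n_i - k_i) = 0$ in $M$, i.e. $t_i s_i n_i = t_i k_i \in K$. Now set $s := \prod_{i=1}^{k} t_i s_i$. Because $S$ is multiplicatively closed, $s \in S$; this is the one point where finiteness of the generating set is genuinely used, since an infinite product would be meaningless here.

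Finally I would verify that this $s$ does the job. For each index $i$ we have $s n_i = \bigl(\prod_{j \neq i} t_j s_j\bigr)\,(t_i s_i n_i) \in \bigl(\prod_{j \neq i} t_j s_j\bigr) K \subseteq K$, using $t_i s_i n_i \in K$ and that $K$ is a submodule. Since $n_1, \dots, n_k$ generate $N$, it follows that $sN = \sum_{i=1}^{k} R\,(s n_i) \subseteq K$, which is the claim.

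I do not expect a real obstacle in this argument; it is a standard localization fact. The only care needed is the bookkeeping with the auxiliary "clearing" elements $t_i$ produced by the equivalence relation defining $S^{-1}M$, and choosing $s$ as the product $\prod_i t_i s_i$ so that a single element of $S$ simultaneously pushes all $k$ generators of $N$ into $K$.
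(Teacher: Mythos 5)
Your argument is correct and is exactly the standard localization computation the paper leaves out with ``This is straightforward'': clear denominators for each of the finitely many generators and multiply the resulting elements of $S$ together. Nothing further is needed.
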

\begin{proof}
This is straightforward.
\end{proof}

Let $S$ be a  multiplicatively closed subset of $R$. Recall that the saturation $S^*$ of $S$ is defined as $S^*=\{x \in R : x/1\  is \ a\ unit  \ of\ S^{-1}R \}$. It is obvious that $S^*$ is a  multiplicatively closed subset of $R$ containing $S$ \cite{Gr92}.

\begin{prop}\label{p2.2}
Let $S$ be a  multiplicatively closed subset of $R$ and $M$ be an R-module. Then we have the following.
\begin{itemize}
\item [(a)] If $N$ is a secondary submodule of $M$ such that $S \cap \sqrt{Ann_R(N)}=\emptyset$, then $N$ is an $S$-secondary submodule of $M$. In fact if $S \subseteq u(R)$ and $N$ is an $S$-secondary submodule of $M$, then $N$ is a secondary submodule of $M$.
\item [(b)] If $S_1 \subseteq S_2$ are  multiplicatively closed subsets of $R$ and $N$ is an $S_1$-secondary submodule of $M$, then $N$ is an $S_2$-secondary submodule of $M$ in case $\sqrt{Ann_R(N)} \cap S_2=\emptyset$.
\item [(c)] $N$ is an $S$-secondary submodule of $M$ if and only if $N$ is an $S^*$-secondary submodule of $M$
\item [(d)] If $N$ is a finitely generated $S$-secondary submodule of $M$, then $S^{-1}N$ is a secondary submodule of $S^{-1}M$
\end{itemize}
\end{prop}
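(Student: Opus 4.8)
The three localisation-flavoured parts (b), (c), (d) all come down to manipulating the condition in Theorem~\ref{t2.3}(a): a fixed $s\in S$ with $srN=sN$ or $(sr)^tN=0$ for every $r\in R$; the point is that the image of such an $s$ is a unit in $S^{-1}R$, so it may be freely cancelled after localising. Part (a) is a direct comparison with the classical notion, recalling that $N$ secondary means $N\neq 0$ and, for each $r\in R$, either $rN=N$ or $r^tN=0$ for some $t\in\Bbb N$. Note first that every $S$-secondary submodule is nonzero, since $N=0$ would give $1\in\sqrt{Ann_R(N)}\cap S$.

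For (a), if $N$ is secondary and $\sqrt{Ann_R(N)}\cap S=\emptyset$, take $s=1$: then $srN=rN$ is either $N=sN$ or $(sr)^tN=r^tN=0$, so Theorem~\ref{t2.3}(a) holds. Conversely, if $S\subseteq u(R)$ and $N$ is $S$-secondary with fixed unit $s\in S$, multiply $srN=sN$ by $s^{-1}$ to get $rN=N$, and multiply $(sr)^tN=s^tr^tN=0$ by $s^{-t}$ to get $r^tN=0$; together with $N\neq 0$ this shows $N$ is secondary. For (b), the fixed $s$ witnessing that $N$ is $S_1$-secondary lies in $S_1\subseteq S_2$ and witnesses the same property for $S_2$, while $\sqrt{Ann_R(N)}\cap S_2=\emptyset$ is assumed.

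For (c), one implication reduces to (b) once we check $\sqrt{Ann_R(N)}\cap S^*=\emptyset$: if $x\in\sqrt{Ann_R(N)}\cap S^*$ then $x/1$ is a unit of $S^{-1}R$, so $uxa=ut$ for suitable $a\in R$ and $u,t\in S$; but $uxa=(ua)x\in\sqrt{Ann_R(N)}$ while $uxa=ut\in S$, a contradiction. Since also $S\subseteq S^*$, part (b) gives that $N$ is $S^*$-secondary. For the converse, use that $s^*\in S^*$ if and only if $s^*b\in S$ for some $b\in R$: given $N$ $S^*$-secondary with fixed $s^*$, put $s=s^*b\in S$; for each $r\in R$, multiplying $s^*rN=s^*N$ by $b$ yields $srN=sN$, and $(sr)^tN=b^t(s^*r)^tN=0$ whenever $(s^*r)^tN=0$. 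As $\sqrt{Ann_R(N)}\cap S\subseteq\sqrt{Ann_R(N)}\cap S^*=\emptyset$, we conclude $N$ is $S$-secondary.

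For (d), let $s\in S$ be the fixed element from Theorem~\ref{t2.3}(a). For $r/t\in S^{-1}R$ we have $(r/t)S^{-1}N=(r/1)S^{-1}N=S^{-1}(rN)$ because $1/t$ is a unit of $S^{-1}R$. Localising $srN=sN$ and cancelling the unit $s/1$ gives $(r/1)S^{-1}N=S^{-1}N$; localising $(sr)^kN=0$ and cancelling $(s/1)^k$ gives $(r/1)^kS^{-1}N=0$. Hence multiplication by $r/t$ on $S^{-1}N$ is either surjective or nilpotent. The single place where finite generation of $N$ enters is the non-triviality $S^{-1}N\neq 0$: if $S^{-1}N=0$, each of the finitely many generators of $N$ is annihilated by some element of $S$, so their product $s'\in S$ kills $N$, giving $s'\in Ann_R(N)\cap S\subseteq\sqrt{Ann_R(N)}\cap S=\emptyset$, a contradiction. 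I expect this non-triviality step in (d), together with selecting the correct $s\in S$ in the converse of (c), to be the only real subtleties; everything else is a formal unwinding of the fixed-element condition.
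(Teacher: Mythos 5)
Your proposal is correct and follows essentially the same route as the paper: (a) and (b) by direct unwinding of the fixed-element condition, (c) via the same saturation argument (your characterization $x\in S^*$ iff $xb\in S$ for some $b\in R$ is exactly the paper's relation $us=uxa$, and your fixed element $s=s^*b$ plays the role of the paper's $us$), and (d) by localizing and cancelling the unit $s/1$, with finite generation used only to rule out $S^{-1}N=0$ (you inline the argument that the paper delegates to its Lemma \ref{l2.1}). No gaps.
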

\begin{proof}
(a) and (b) These are clear.

(c) Assume that $N$ is an $S$-secondary submodule of $M$.  We claim that $\sqrt{Ann_R(N)} \cap  S^*=\emptyset$. To see this assume that there exists a fixed $x \in \sqrt{Ann_R(N)} \cap  S^*$  As $x \in S^*$,  $x/1$ is a unit of $S^{-1}R$ and so $(x/1)(a/s)=1$ for some $a \in R$ and $s \in S$. This yields that $us = uxa$ for some $u \in S$. Now we have that $us = uxa \in  \sqrt{Ann_R(N)} \cap  S$,  a
contradiction. Thus, $\sqrt{Ann_R(N)} \cap  S^*=\emptyset$. Now as $S\subseteq S^*$, by part (b), $N$ is an $S^*$-secondary submodule of $M$. Conversely, assume that $N$ is an $S^*$-secondary submodule of $M$. Let $rN \subseteq K$. As $N$ is an $S^*$-secondary submodule of $M$, there is a fixed $x \in  S^*$ such that $xr \in \sqrt{Ann_R(N)}$ or $xN \subseteq K$. As $x/1$ is a unit of $S^{-1}R$, there exist $u, s \in S$ and $a \in R$ such that $us = uxa$. Then
note that $(us)r = uaxr \in \sqrt{Ann_R(N)}$ or $(us)N=ua(xN) \subseteq K$. Therefore, $N$ is an $S$-secondary submodule of $M$.

(d) If $S^{-1}N=0$, then as $N$ is finitely generated, there is an $s \in S$ such that $s \in \sqrt{Ann_R(N)}$ by Lemma \ref{l2.1}. This implies that $\sqrt{Ann_R(N)} \cap  S\not=\emptyset$, a contradiction. Thus  $S^{-1}N\not=0$. Now let $r/t \in S^{-1}R$. As $N$ is an $S$-secondary submodule of $M$, there is a fixed $s \in S$ such that $rsN=sN$ or $(rs)^tN=0$ for some $t \in \Bbb N$. If $rsN=sN$, then $(r/s)S^{-1}N=S^{-1}N$. If  $(rs)^tN=0$, then $(r/s)^tS^{-1}N=0$, as needed.
\end{proof}

\begin{cor}\label{c2.2}
Let $M$ be an R-module and set $S=\{1\}$. Then  every secondary submodule of $M$ is an $S$-secondary submodule of $M$.
\end{cor}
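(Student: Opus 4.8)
The plan is to obtain this as an immediate consequence of Proposition \ref{p2.2}(a), once one checks that the hypothesis appearing there is automatic in the present situation. Proposition \ref{p2.2}(a) asserts that a secondary submodule $N$ of $M$ with $S \cap \sqrt{Ann_R(N)} = \emptyset$ is $S$-secondary, so the whole task reduces to verifying that $\{1\} \cap \sqrt{Ann_R(N)} = \emptyset$ for every secondary submodule $N$ of $M$.

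This verification is short. By definition a secondary submodule is non-zero, so $Ann_R(N)$ is a proper ideal of $R$; consequently $1 \notin Ann_R(N)$ and hence $1 \notin \sqrt{Ann_R(N)}$. Thus $S \cap \sqrt{Ann_R(N)} = \{1\} \cap \sqrt{Ann_R(N)} = \emptyset$, and Proposition \ref{p2.2}(a) applies verbatim to give that $N$ is an $S$-secondary submodule of $M$.

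If one prefers a self-contained argument that does not invoke Proposition \ref{p2.2}, one can instead check condition (a) of Theorem \ref{t2.3} by hand with the fixed element $s = 1 \in S$. For any $r \in R$, secondariness of $N$ means that the multiplication map $N \stackrel{r}{\rightarrow} N$ is either surjective or nilpotent: in the surjective case $srN = rN = N = sN$, and in the nilpotent case $(sr)^tN = r^tN = 0$ for some $t \in \Bbb N$. Either way Theorem \ref{t2.3}(a) is satisfied, so $N$ is $S$-secondary.

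I do not anticipate a genuine obstacle here; the only point that must not be overlooked is that the word ``secondary'' already carries the non-zero requirement, which is precisely what forces the radical-of-annihilator condition $\sqrt{Ann_R(N)} \cap S = \emptyset$ to hold trivially when $S = \{1\}$.
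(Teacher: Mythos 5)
Your proof is correct and follows essentially the same route as the paper: since a secondary submodule $N$ is non-zero, $1 \notin \sqrt{Ann_R(N)}$, so $\{1\} \cap \sqrt{Ann_R(N)} = \emptyset$ and Proposition \ref{p2.2}(a) applies. The extra direct check of Theorem \ref{t2.3}(a) with $s=1$ is a fine, if redundant, supplement.
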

\begin{proof}
Let $N$ be a secondary submodule of $M$. Then as $N \not=0$, we have $1 \not  \in \sqrt{Ann_R(N)}$. Hence $S \cap \sqrt{Ann_R(N)}=\emptyset$. Thus the result follows from Proposition \ref{p2.2} (a).
\end{proof}

The following examples show that the converses of Proposition \ref{p2.2} (a) and (d) are not true in general.
\begin{ex}\label{e2.2}
Take the $\Bbb Z$-module $M=\Bbb Z_{p^\infty}\oplus \Bbb Z_2$ for a prime number $p$. Then  for each $n \in \Bbb N$, $2^n(\Bbb Z_{p^\infty}\oplus \Bbb Z_2)=\Bbb Z_{p^\infty}\oplus 0$ implies that $M$ is not a secondary $\Bbb Z$-module.  Now,
take the  multiplicatively closed subset $S =\Bbb Z\setminus \{0\}$ and put $s=2$. Then $2rM=\Bbb Z_{p^\infty}\oplus 0=2M$ for all $r \in \Bbb Z$ and so  $M$ is an $S$-secondary $\Bbb Z$-module.
\end{ex}

\begin{ex}\label{e22.2}
Consider the $\Bbb Z$-module $M=\Bbb Q \oplus \Bbb Q$, where $\Bbb Q$ is the field of rational numbers. Take the submodule
$N = \Bbb Z \oplus 0$ and the  multiplicatively closed subset  $S =\Bbb Z\setminus \{0\}$. Then one can see that $N$ is not an $S$-secondary submodule of $M$. Since $S^{-1}\Bbb Z =\Bbb Q$
is a field, $S^{-1}(\Bbb Q \oplus \Bbb Q)$ is a vector space so that a non-zero submodule $S^{-1}N$ is a secondary submodule of $S^{-1}(\Bbb Q \oplus \Bbb Q)$.
\end{ex}

\begin{prop}\label{p2.4}
Let $M$ be an $R$-module and $S$ be a  multiplicatively closed subset of $R$.  If $N$ is an $S$-secondary submodule of $M$, then $\sqrt{Ann_R(N)}$ is an $S$-prime ideal of $R$.
\end{prop}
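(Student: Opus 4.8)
The plan is to use characterization (a) of Theorem \ref{t2.3} and to check directly that $\sqrt{Ann_R(N)}$ satisfies the definition of an $S$-prime ideal, taking as witness the same element $s\in S$ supplied by the $S$-secondary property of $N$. The disjointness condition $(\sqrt{Ann_R(N)} :_R R)\cap S=\emptyset$ required in the definition of an $S$-prime submodule of the $R$-module $R$ is automatic, since $(\sqrt{Ann_R(N)} :_R R)=\sqrt{Ann_R(N)}$ and $\sqrt{Ann_R(N)}\cap S=\emptyset$ is part of the standing hypothesis on an $S$-secondary submodule. So the whole task reduces to showing: if $a,b\in R$ and $ab\in \sqrt{Ann_R(N)}$, then $sa\in \sqrt{Ann_R(N)}$ or $sb\in \sqrt{Ann_R(N)}$.

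Before that, I would record one elementary fact: if $c\in R$ satisfies $scN=sN$, then $(sc)^nN=s^nN$ for every $n\in \Bbb N$. This follows by induction on $n$, using $(sc)^{n+1}N=(sc)\bigl((sc)^nN\bigr)=(sc)(s^nN)=s^n(scN)=s^{n+1}N$. This multiplicativity statement is the only nonroutine ingredient.

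Now suppose $ab\in \sqrt{Ann_R(N)}$, say $(ab)^kN=0$ for some $k\in \Bbb N$, and let $s\in S$ be as in Theorem \ref{t2.3}(a). Applying (a) with $r=a$: either $(sa)^tN=0$ for some $t$, so $sa\in \sqrt{Ann_R(N)}$ and we are done, or $saN=sN$. In the latter case, apply (a) with $r=b$: either $(sb)^{t'}N=0$ for some $t'$, so $sb\in \sqrt{Ann_R(N)}$ and we are done, or $sbN=sN$ as well. It remains to rule out the case $saN=sN=sbN$. In that case $sabN=b(saN)=b(sN)=sbN=sN$, so by the auxiliary fact $(sab)^nN=s^nN$ for all $n$; in particular $s^kN=(sab)^kN=s^k(ab)^kN=0$, i.e. $s\in \sqrt{Ann_R(N)}\cap S$, contradicting the hypothesis. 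Hence this case cannot occur, and in every case $sa\in \sqrt{Ann_R(N)}$ or $sb\in \sqrt{Ann_R(N)}$, so $\sqrt{Ann_R(N)}$ is an $S$-prime ideal of $R$.

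The main obstacle is precisely this last step: seeing that the simultaneous ``surjectivity'' $saN=sN$ and $sbN=sN$ is incompatible with $ab$ lying in the radical of $Ann_R(N)$. The multiplicativity fact $scN=sN\Rightarrow (sc)^nN=s^nN$ is what turns that incompatibility into the contradiction $s\in \sqrt{Ann_R(N)}\cap S$; once it is in hand, everything else is bookkeeping with the defining condition of an $S$-secondary submodule.
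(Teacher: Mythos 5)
Your proposal is correct and follows essentially the same route as the paper: apply condition (a) of Theorem \ref{t2.3} to $a$ and $b$ with the one fixed $s$, and rule out the case $saN=sN=sbN$ by showing it forces $s\in\sqrt{Ann_R(N)}\cap S$. Your explicit induction giving $(sab)^nN=s^nN$ in fact tidies up a small sloppiness in the paper's version, which writes $(bas)^nN=sN$ where $s^nN$ is meant, though the contradiction is the same either way.
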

\begin{proof}
Let $ab \in \sqrt{Ann_R(N)}$ for some $a, b \in R$. Then $(ab)^nN=0$ for some $n \in \Bbb N$. As $N$ is an $S$-secondary submodule of $M$, there exists a fixed $s \in S$ such that $asN=sN$ or $(as)^tN=0$ for some $t \in \Bbb N$ and $bsN=sN$ or $(bs)^hN=0$  for some $h \in \Bbb N$.
If $(as)^tN=0$ or $(bs)^hN=0$ we are done. If $asN=sN$ and $bsN=sN$, then $0=(bas)^nN=sN$. Thus $s \in \sqrt{Ann_R(N)}$, a contradiction. Thus in any case, $(as)^tN=0$ or $(bs)^hN=0$, as needed.
\end{proof}

The following example shows that the converse of Proposition \ref{p2.4} is not
true in general.
\begin{ex}\label{e2.4}
Consider $M = \Bbb Z\oplus Q$ as a $\Bbb Z$-module. Take the  multiplicatively closed subset  $S =\Bbb Z\setminus \{0\}$. Then $Ann_R(M) = 0$ is an $S$-prime ideal of $R$. But $M$ is not an $S$-secondary submodule of $M$.
\end{ex}

Let $R_i$ be a commutative ring with identity, $M_i$ be an $R_i$-module for each $i = 1, 2,..., n$, and $n \in \Bbb N$. Assume that
$M = M_1\times M_2\times ...\times M_n$ and $R = R_1\times R_2\times ...\times R_n$. Then $M$ is clearly
an $R$-module with componentwise addition and scalar multiplication. Also,
if $S_i$ is a multiplicatively closed subset of $R_i$ for each $i = 1, 2,...,n$,  then
$S = S_1\times S_2\times ...\times S_n$ is a multiplicatively closed subset of $R$. Furthermore,
each submodule $N$ of $M$ is of the form $N = N_1\times N_2\times...\times N_n$, where $N_i$ is a
submodule of $M_i$. 
\begin{thm}\label{t2.6}
Let  $M = M_1 \times M_2$ be an $R = R_1 \times R_2$-module and $S = S_1\times S_2$ be a   multiplicatively closed subset of
$R$, where $M_i$ is an $R_i$-module and $S_i$ is a   multiplicatively closed subset of $R_i$ for each $i = 1, 2$. Let $N=N_1\times N_2$ be a submodule of $M$. Then the following are equivalent:
\begin{itemize}
\item [(a)] $N$ is an $S$-secondary submodule of $M$;
\item [(b)] $N_1$ is an $S_1$-secondary submodule of $M_1$ and $\sqrt{Ann_{R_2}(N_2)} \cap S_2\not=\emptyset$ or $N_2$ is an $S_2$-secondary submodule of $M_2$ and $\sqrt{Ann_{R_1}(N_1)} \cap S_1\not=\emptyset$.
\end{itemize}
\end{thm}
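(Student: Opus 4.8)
The plan is to reduce the statement to coordinatewise computations in $R = R_1 \times R_2$ and $M = M_1 \times M_2$. Writing $N = N_1 \times N_2$, one has $Ann_R(N) = Ann_{R_1}(N_1) \times Ann_{R_2}(N_2)$, hence $\sqrt{Ann_R(N)} = \sqrt{Ann_{R_1}(N_1)} \times \sqrt{Ann_{R_2}(N_2)}$; moreover, for $s = (s_1,s_2)$ and $r = (r_1,r_2)$ in $R$ we have $srN = s_1r_1N_1 \times s_2r_2N_2$ and $(sr)^tN = (s_1r_1)^tN_1 \times (s_2r_2)^tN_2$. Two preliminary remarks are worth isolating first: $\sqrt{Ann_R(N)} \cap S = \emptyset$ holds precisely when $\sqrt{Ann_{R_1}(N_1)} \cap S_1 = \emptyset$ or $\sqrt{Ann_{R_2}(N_2)} \cap S_2 = \emptyset$ (because $\sqrt{Ann_R(N)}\cap S$ is the product of the two factor intersections); and, since each $S_i$ is multiplicatively closed, $\sqrt{Ann_{R_i}(N_i)} \cap S_i \neq \emptyset$ is equivalent to $Ann_{R_i}(N_i) \cap S_i \neq \emptyset$ (replace an element of the former intersection by a suitable power). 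Throughout I work with characterization $(a)$ of Theorem \ref{t2.3}.

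For $(a) \Rightarrow (b)$, let $s = (s_1,s_2) \in S$ be a fixed element witnessing that $N$ is $S$-secondary. Applying condition $(a)$ of Theorem \ref{t2.3} to $r = (1,0)$ gives $s_2N_2 = 0$ or $s_1^tN_1 = 0$ for some $t$, so at least one of the two intersections $\sqrt{Ann_{R_i}(N_i)} \cap S_i$ is nonempty; on the other hand $\sqrt{Ann_R(N)} \cap S = \emptyset$ forces at least one of them to be empty. Hence exactly one is empty, and by the symmetry of the statement in the two indices I may assume $\sqrt{Ann_{R_1}(N_1)} \cap S_1 = \emptyset$ while $\sqrt{Ann_{R_2}(N_2)} \cap S_2 \neq \emptyset$. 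It then remains to check that $N_1$ is $S_1$-secondary with fixed element $s_1$: for $r_1 \in R_1$, apply condition $(a)$ of Theorem \ref{t2.3} to $r = (r_1,1) \in R$; the alternative $srN = sN$ forces $s_1r_1N_1 = s_1N_1$, and the alternative $(sr)^tN = 0$ forces $(s_1r_1)^tN_1 = 0$, so in either case the defining condition of Theorem \ref{t2.3}$(a)$ holds for $N_1$. Together with the disjointness already noted, this establishes the first alternative of $(b)$.

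For $(b) \Rightarrow (a)$, by symmetry assume the first alternative holds: $N_1$ is $S_1$-secondary with fixed element $s_1 \in S_1$, and $\sqrt{Ann_{R_2}(N_2)} \cap S_2 \neq \emptyset$. By the preliminary remark, choose $s_2 \in S_2$ with $s_2N_2 = 0$ and set $s := (s_1,s_2) \in S$, so that $sN = s_1N_1 \times 0$. First, $\sqrt{Ann_R(N)} \cap S = \emptyset$, since $N_1$ being $S_1$-secondary forces $\sqrt{Ann_{R_1}(N_1)} \cap S_1 = \emptyset$. Next, for arbitrary $r = (r_1,r_2) \in R$, the fact that $N_1$ is $S_1$-secondary (with fixed element $s_1$) gives $s_1r_1N_1 = s_1N_1$ or $(s_1r_1)^tN_1 = 0$ for some $t$. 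In the first case $srN = s_1r_1N_1 \times s_2r_2N_2 = s_1N_1 \times 0 = sN$; in the second, $s_2N_2 = 0$ gives $(s_2r_2)^tN_2 = 0$, whence $(sr)^tN = 0$. Thus $s$ witnesses condition $(a)$ of Theorem \ref{t2.3} for $N$, so $N$ is $S$-secondary.

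The one step needing care is the bookkeeping in $(a) \Rightarrow (b)$: combining the disjointness hypothesis $\sqrt{Ann_R(N)} \cap S = \emptyset$ with the test element $r = (1,0)$ to pin down exactly which factor carries the \emph{secondary} behaviour and which one has annihilator radical meeting $S_i$. Once the correct index is identified, every remaining verification is a routine coordinatewise computation, using only that each $S_i$ is multiplicatively closed and characterization $(a)$ of Theorem \ref{t2.3}.
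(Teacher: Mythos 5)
Your proof is correct, and while it follows the same overall product/coordinatewise strategy as the paper, it differs in the ingredients. For $(a)\Rightarrow(b)$ the paper first invokes Proposition \ref{p2.4} to see that $\sqrt{Ann_R(N)}=\sqrt{Ann_{R_1}(N_1)}\times\sqrt{Ann_{R_2}(N_2)}$ is an $S$-prime ideal and then cites \cite[Lemma 2.13]{satk19} to conclude that one of the intersections $\sqrt{Ann_{R_i}(N_i)}\cap S_i$ is nonempty, before testing with $(1,r_2)N\subseteq M_1\times K_2$ in characterization (b) of Theorem \ref{t2.3}; you instead get the same dichotomy directly from the test element $r=(1,0)$ and the factorization $\sqrt{Ann_R(N)}\cap S=(\sqrt{Ann_{R_1}(N_1)}\cap S_1)\times(\sqrt{Ann_{R_2}(N_2)}\cap S_2)$, working throughout with characterization (a) of Theorem \ref{t2.3}. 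This makes your argument more self-contained (no appeal to Proposition \ref{p2.4} or the external lemma), at the cost of being slightly less conceptual. In $(b)\Rightarrow(a)$ your normalization is a genuine improvement in care: the paper picks $s_2\in\sqrt{Ann_{R_2}(N_2)}\cap S_2$ and then asserts $s(N_1\times N_2)\subseteq K_1\times K_2$ in the case $s_1N_1\subseteq K_1$, which as written needs $s_2N_2\subseteq K_2$ even though only $s_2^kN_2=0$ is guaranteed; your observation that $\sqrt{Ann_{R_2}(N_2)}\cap S_2\neq\emptyset$ is equivalent to $Ann_{R_2}(N_2)\cap S_2\neq\emptyset$ (replace $s_2$ by a power, using that $S_2$ is multiplicatively closed) lets you take $s_2N_2=0$ and makes both alternatives immediate, silently repairing that small imprecision.
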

\begin{proof}
$(a)\Rightarrow (b)$
Let $N = N_1 \times N_2$ be an $S$-secondary submodule of $M$. Then $\sqrt{Ann_R(N)}= \sqrt{Ann_{R_1}(N_1)}\times \sqrt{Ann_{R_2}(N_2)}$ is an $S$-prime ideal of $R$ by Proposition \ref{p2.4}. By \cite[Lemma 2.13]{satk19}, either $\sqrt{Ann_R(N_1)} \cap S_1\not = \emptyset$ or  $\sqrt{Ann_R(N_2)} \cap S_2\not = \emptyset$. We may assume that $\sqrt{Ann_R(N_1)} \cap S_1\not = \emptyset$. We show that
$N_2$ is an $S_2$-secondary submodule of $M_2$. To see this, let $r_2N_2 \subseteq K_2$ for some $r_2 \in R_2$ and a submodule $K_2$ of $M_2$. Then $(1,r_2)(N_1 \times N_2) \subseteq M_1 \times K_2$. As $N$ is an $S$-secondary submodule of $M$, there exists a fixed $(s_1,s_2) \in S$ such that $(s_1,s_2)(N_1 \times N_2) \subseteq M_1 \times K_2$ or $((s_1,s_2)(1,r_2))^t(N_1 \times N_2)=0$ for some $t \in \Bbb N$. It follows that  $s_2N_2 \subseteq K_2$ or $(s_2r_2)^tN_2=0$ and so $N_2$ is an $S_2$-secondary submodule of $M_2$. Similarly, if $\sqrt{Ann_{R_2}(N_2)} \cap S_2\not=\emptyset$, one can see that $N_1$ is an $S_1$-secondary submodule of $M_1$.

$(b)\Rightarrow (a)$
Assume that $N_1$ is an $S_1$-secondary submodule of $M_1$ and $\sqrt{Ann_{R_2}(N_2)} \cap S_2\not=\emptyset$. Then there exists a fixed $s_2 \in \sqrt{Ann_{R_2}(N_2)} \cap S_2$.  Let $(r_1, r_2)(N_1 \times N_2) \subseteq K_1 \times K_2$ for some $r_i \in R_i$ and submodule $K_i$ of $M_i$, where $i = 1, 2$. Then $r_1N_1 \subseteq K_1$.  As $N_1$ is an $S_1$-secondary submodule of $M_1$,  there exists a fixed $s_1 \in S_1$ such that $s_1N_1 \subseteq K_1$ or $(s_1r_1)^tN_1 =0$ for some $t \in \Bbb N$.
Now we set $s =(s_1, s_2)$. Then $s(N_1 \times N_2) \subseteq K_1 \times K_2$ or $(s(r_1, r_2))^t(N_1 \times N_2)=0$.
Therefore, $N$ is an $S$-secondary submodule of $M$. Similarly one can show that if  $N_2$ is an $S_2$-secondary submodule of $M_2$ and $\sqrt{Ann_{R_1}(N_1)} \cap S_1\not=\emptyset$, then  $N$ is an $S$-secondary submodule of $M$.
\end{proof}

\begin{thm}\label{t2.7}
Let  $M = M_1 \times M_2\times ... \times M_n$ be an $R = R_1 \times R_2\times ...\times R_n$-module and $S = S_1\times S_2\times ... \times S_n$ be a   multiplicatively closed subset of
$R$, where $M_i$ is an $R_i$-module and $S_i$ is a  multiplicatively closed subset of $R_i$ for each $i = 1, 2,...,n$. Let $N=N_1\times N_2\times ... \times N_n$ be a submodule of $M$. Then the following are equivalent:
\begin{itemize}
\item [(a)] $N$ is an $S$-secondary submodule of $M$;
\item [(b)] $N_i$ is an $S_i$-secondary submodule of $M_i$ for some $i \in \{1,2,...,n\}$ and $\sqrt{Ann_{R_j}(N_j)} \cap S_j\not=\emptyset$ for all $j \in \{1,2,...,n\}-\{ i \}$.
\end{itemize}
\end{thm}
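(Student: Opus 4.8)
The plan is to argue by induction on $n$. For $n=1$ the statement is vacuous and for $n=2$ it is exactly Theorem \ref{t2.6}, so assume $n\geq 3$ and that the result holds for products of fewer than $n$ factors. Regroup the decomposition as $M=M_1\times M'$, where $M'=M_2\times\cdots\times M_n$ is a module over $R'=R_2\times\cdots\times R_n$; set $S'=S_2\times\cdots\times S_n$ and $N'=N_2\times\cdots\times N_n$, so that $N=N_1\times N'$ and $S=S_1\times S'$.

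The first step is a bookkeeping observation in the product ring: since $Ann_{R'}(N')=\prod_{j=2}^{n}Ann_{R_j}(N_j)$ and the radical of a product ideal is the product of the radicals, we get $\sqrt{Ann_{R'}(N')}=\prod_{j=2}^{n}\sqrt{Ann_{R_j}(N_j)}$, and hence $\sqrt{Ann_{R'}(N')}\cap S'\neq\emptyset$ if and only if $\sqrt{Ann_{R_j}(N_j)}\cap S_j\neq\emptyset$ for \emph{every} $j\in\{2,\dots,n\}$, because an element of $S'$ has all coordinates in the respective $S_j$ and lies in the product of the radicals exactly when each coordinate does. Now apply Theorem \ref{t2.6} to $M=M_1\times M'$: $N=N_1\times N'$ is an $S$-secondary submodule of $M$ if and only if either (i) $N_1$ is an $S_1$-secondary submodule of $M_1$ and $\sqrt{Ann_{R'}(N')}\cap S'\neq\emptyset$, or (ii) $N'$ is an $S'$-secondary submodule of $M'$ and $\sqrt{Ann_{R_1}(N_1)}\cap S_1\neq\emptyset$. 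In case (i), the bookkeeping step rewrites the second clause as ``$\sqrt{Ann_{R_j}(N_j)}\cap S_j\neq\emptyset$ for all $j\neq 1$'', which is precisely condition (b) with $i=1$. In case (ii), the induction hypothesis applied to $M'=M_2\times\cdots\times M_n$ rewrites ``$N'$ is $S'$-secondary'' as: there is some $i\in\{2,\dots,n\}$ with $N_i$ an $S_i$-secondary submodule of $M_i$ and $\sqrt{Ann_{R_j}(N_j)}\cap S_j\neq\emptyset$ for all $j\in\{2,\dots,n\}\setminus\{i\}$; conjoining this with the clause $\sqrt{Ann_{R_1}(N_1)}\cap S_1\neq\emptyset$ gives exactly condition (b) for that $i\in\{2,\dots,n\}$. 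Thus the disjunction (i) or (ii) supplied by Theorem \ref{t2.6} is logically equivalent to condition (b), completing the induction.

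The routine items are the identity $\sqrt{\,\prod I_j\,}=\prod\sqrt{I_j}$ for ideals of a finite product of rings, and the check that condition (b) forces $\sqrt{Ann_R(N)}\cap S=\emptyset$ (since if $N_i$ is $S_i$-secondary then $\sqrt{Ann_{R_i}(N_i)}\cap S_i=\emptyset$, and $\sqrt{Ann_R(N)}=\prod_j\sqrt{Ann_{R_j}(N_j)}$), so that Definition \ref{d2.1} legitimately applies to both sides of the equivalence. I expect the only delicate point to be the quantifier bookkeeping in the two alternatives: one must keep firmly in mind that ``$\sqrt{Ann_{R'}(N')}\cap S'\neq\emptyset$'' forces a nonempty intersection in \emph{all} of the remaining coordinates, whereas the $S'$-secondary hypothesis singles out \emph{one} distinguished coordinate; matching these against the single existential ``for some $i$'' together with the universal ``for all $j\neq i$'' in (b) is the crux of the argument, and everything else is formal.
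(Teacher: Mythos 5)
Your proof is correct and follows essentially the same route as the paper: induction on $n$ with Theorem \ref{t2.6} handling both the base case and the inductive step. The only differences are cosmetic — you split off the first factor ($M = M_1 \times M'$) where the paper splits off the last, and you spell out the bookkeeping $\sqrt{Ann_{R'}(N')}=\sqrt{Ann_{R_2}(N_2)}\times\cdots\times\sqrt{Ann_{R_n}(N_n)}$ and the nonempty-intersection check that the paper leaves implicit in "the result follows from the induction hypothesis."
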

\begin{proof}
We apply induction on $n$. For $n = 1$, the result is true. If $n = 2$, then the result follows from Theorem \ref{t2.6}. Now assume that parts (a) and (b) are equal when $k < n$. We shall prove $(b)\Leftrightarrow (a)$ when $k = n$. Let
$N=N_1\times N_2\times ... \times N_n$.  Put $\acute{N}=N_1\times N_2\times ... \times N_{n-1}$ and $\acute{S} = S_1\times S_2\times ... \times S_{n-1}$. Then by Theorem \ref{t2.6}, the necessary and sufficient condition for $N$ is an $S$-secondary submodule of $M$ is that $\acute{N}$ is an $\acute{S}$-secondary submodule of $\acute{M}$ and  $\sqrt{Ann_{R_n}(N_n)} \cap S_n\not=\emptyset$ or
 $N_n$ is an $S_n$-secondary submodule of $M_n$ and $\sqrt{Ann_{\acute{R}}(\acute{N})} \cap \acute{S}\not=\emptyset$, where  $\acute{R}=R_1\times R_2\times ... \times R_{n-1}$. Now the result follows from the induction hypothesis.
\end{proof}

\begin{lem}\label{t22.8}
Let $S$ be a  multiplicatively closed subset of $R$ and $N$ be an $S$-secondary submodule of an $R$-module $M$. Then the following statements hold for some $s \in S$.
\begin{itemize}
\item [(a)] $sN \subseteq \acute{s}N$ for all $\acute{s}\in S$.
\item [(b)] $(\sqrt{Ann_R(N)}:_R\acute{s}) \subseteq (\sqrt{Ann_R(N)}:_Rs)$ for all $\acute{s} \in S$.
\end{itemize}
\end{lem}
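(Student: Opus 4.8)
The plan is to extract a single element $s \in S$ from characterization~(a) of Theorem~\ref{t2.3} and to show that this one $s$ witnesses both (a) and (b) simultaneously. So first I would fix $s \in S$ with the property that for every $r \in R$ either $srN = sN$ or $(sr)^tN = 0$ for some $t \in \Bbb N$. Everything then reduces to applying this dichotomy to well-chosen ring elements and exploiting the standing hypothesis $\sqrt{Ann_R(N)} \cap S = \emptyset$.

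For part (a), given an arbitrary $\acute{s} \in S$, I would apply the dichotomy to $r = \acute{s}$. The branch $(s\acute{s})^tN = 0$ cannot occur, since it would force $s\acute{s} \in \sqrt{Ann_R(N)}$ while $s\acute{s} \in S$ (as $S$ is multiplicatively closed), contradicting $\sqrt{Ann_R(N)} \cap S = \emptyset$. Hence $s\acute{s}N = sN$, and therefore $sN = s\acute{s}N = \acute{s}(sN) \subseteq \acute{s}N$, which is (a). I would also record the consequence, obtained by multiplying the equality $s\acute{s}N = sN$ by $\acute{s}$ repeatedly, that $s\acute{s}^kN = sN$ for every $k \geq 0$; this is the only auxiliary identity needed for part (b).

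For part (b), let $\acute{s} \in S$ and take $a \in (\sqrt{Ann_R(N)} :_R \acute{s})$, so that $(a\acute{s})^nN = 0$ for some $n \in \Bbb N$. Using the identity $s\acute{s}^nN = sN$ from the previous paragraph together with commutativity of $R$, I get $a^n sN = a^n(s\acute{s}^nN) = s(a\acute{s})^nN = 0$, and hence $(as)^nN = a^n s^n N = s^{n-1}(a^n sN) = 0$. Thus $as \in \sqrt{Ann_R(N)}$, i.e. $a \in (\sqrt{Ann_R(N)} :_R s)$, giving the required inclusion. The only step that needs genuine care — and the one place the hypotheses actually enter — is the first reduction in part (a): one must notice that the ``nilpotent'' alternative of Theorem~\ref{t2.3}(a) is incompatible with $\sqrt{Ann_R(N)} \cap S = \emptyset$, so that the equality $s\acute{s}N = sN$ is forced. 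After that, both parts are routine manipulations of this equality, and I do not anticipate any further obstacle.
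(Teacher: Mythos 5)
Your proof is correct, and it takes a somewhat different route from the paper, most notably in part (b). For (a), the paper argues through completely irreducible submodules: it takes $L$ completely irreducible with $\acute{s}N\subseteq L$, rules out $(s\acute{s})^tN=0$ because $s\acute{s}\in S$ and $\sqrt{Ann_R(N)}\cap S=\emptyset$, concludes $sN\subseteq L$, and then invokes Remark~\ref{r2.1}; you instead apply characterization (a) of Theorem~\ref{t2.3} directly to $r=\acute{s}$ and get the stronger equality $s\acute{s}N=sN$, from which $sN\subseteq\acute{s}N$ is immediate. The key point (excluding the nilpotent branch via $s\acute{s}\in S$) is the same, but your version avoids the detour through completely irreducible submodules. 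For (b), the paper does not compute at all: it combines Proposition~\ref{p2.4} (that $\sqrt{Ann_R(N)}$ is an $S$-prime ideal) with the external Lemma 2.16(ii) of the $S$-prime paper \cite{satk19}. Your argument is self-contained: from $s\acute{s}^kN=sN$ and $(a\acute{s})^nN=0$ you get $a^n sN = s(a\acute{s})^nN=0$ and hence $(as)^nN=0$, so $a\in(\sqrt{Ann_R(N)}:_Rs)$. This buys two things: no reliance on the cited result, and the explicit guarantee that the same fixed $s$ coming from the $S$-secondary definition works in both (a) and (b), which is exactly how the lemma is later used in Theorem~\ref{t2.8}; the paper's citation-based proof leaves that identification of the two elements $s$ implicit.
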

\begin{proof}
(a) Let $N$ be an $S$-secondary submodule of $M$. Then there is an $s \in S$ such that $rN \subseteq K$ for each $r \in R$ and a submodule $K$ of $M$ implies that $sN \subseteq K$ or $(sr)^tN=0$ for some $t \in \Bbb N$. Let $L$ be a completely irreducible submodule of $M$ such that $\acute{s}N \subseteq L$. Then $sN \subseteq L$ or $(\acute{s}s)^tN=0$. As $\sqrt{Ann_R(N)} \cap S =\emptyset$, we get that $sN \subseteq L$. Thus $sN \subseteq \acute{s}N$ by Remark \ref{r2.1}.

(b) This follows from Proposition \ref{p2.4} and \cite[Lemma 2.16 (ii)]{satk19}.
\end{proof}

\begin{prop}\label{t2.8}
Let $S$ be a  multiplicatively closed subset of $R$ and $N$ be a finitely generated submodule of $M$ such that $\sqrt{Ann_R(N)} \cap S=\emptyset$. Then the following are equivalent:
\begin{itemize}
\item [(a)] $N$ is an $S$-secondary submodule of $M$;
\item [(b)] $S^{-1}N$ is a secondary submodule of $S^{-1}M$ and there is an $s \in S$ satisfying $sN \subseteq \acute{s}N$ for all $\acute{s}\in S$.
\end{itemize}
\end{prop}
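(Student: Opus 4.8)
The plan is to obtain $(a)\Rightarrow(b)$ directly from results already in hand and to concentrate the argument on $(b)\Rightarrow(a)$, where the only real subtlety is producing a single fixed $s$ that works simultaneously for every $r$.

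For $(a)\Rightarrow(b)$, since $N$ is a finitely generated $S$-secondary submodule, Proposition~\ref{p2.2}(d) gives at once that $S^{-1}N$ is a secondary submodule of $S^{-1}M$, and Lemma~\ref{t22.8}(a) supplies a fixed $s\in S$ with $sN\subseteq\acute{s}N$ for all $\acute{s}\in S$. So no new argument is needed in this direction.

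For $(b)\Rightarrow(a)$, let $s_0\in S$ be the element provided by (b), so that $s_0N\subseteq\acute{s}N$ for every $\acute{s}\in S$. I would verify condition (b) of Theorem~\ref{t2.3} using this same $s_0$ as the fixed element. Given $r\in R$ and a submodule $K$ of $M$ with $rN\subseteq K$, localize to get $(r/1)S^{-1}N\subseteq S^{-1}K$; since $S^{-1}N$ is secondary, multiplication by $r/1$ on $S^{-1}N$ is either surjective or nilpotent. In the surjective case $S^{-1}N=(r/1)S^{-1}N\subseteq S^{-1}K$, so finite generation of $N$ together with Lemma~\ref{l2.1} yields some $s_1\in S$ with $s_1N\subseteq K$, whence $s_0N\subseteq s_1N\subseteq K$. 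In the nilpotent case $(r/1)^nS^{-1}N=0$ for some $n$, so $S^{-1}(r^nN)=0$; as $r^nN$ is finitely generated, Lemma~\ref{l2.1} (applied with the zero submodule) gives $s_2\in S$ with $s_2r^nN=0$, and then $r^ns_0N\subseteq r^ns_2N=0$, so that $(rs_0)^nN=s_0^{\,n-1}(s_0r^nN)=0$. In either case the defining condition of Theorem~\ref{t2.3}(b) holds with the fixed element $s_0$, and therefore $N$ is an $S$-secondary submodule of $M$.

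The step I expect to be the main obstacle is exactly this uniformity of $s$: a bare localization argument hands back elements $s_1,s_2\in S$ that depend on $r$, and the entire point of the hypothesis ``$sN\subseteq\acute{s}N$ for all $\acute{s}\in S$'' is to absorb these $r$-dependent elements into the single fixed $s_0$. It is also worth checking at the outset that the standing hypothesis $\sqrt{Ann_R(N)}\cap S=\emptyset$ forces $N\neq 0$ and $S^{-1}N\neq 0$, so that ``secondary'' is meaningful on the localized side and so that $s_0$ is genuinely a nonzerodivisor-type witness; this is immediate, since $1\in S$.
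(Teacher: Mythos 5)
Your proposal is correct and follows essentially the same route as the paper: $(a)\Rightarrow(b)$ by citing Proposition~\ref{p2.2}(d) and Lemma~\ref{t22.8}, and $(b)\Rightarrow(a)$ by localizing, invoking Lemma~\ref{l2.1} to produce $r$-dependent elements $s_1,s_2\in S$, and then absorbing them into the fixed $s_0$ via $s_0N\subseteq\acute{s}N$. Your write-up of the nilpotent case is in fact slightly cleaner than the paper's, but there is no substantive difference in method.
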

\begin{proof}
$(a)\Rightarrow (b)$
This follows from Proposition \ref{p2.2} (d) and  Lemma \ref{t22.8}.

$(b)\Rightarrow (a)$
Let $aN \subseteq K$ for some $a \in R$ and a submodule $K$ of $M$. Then $(a/1)(S^{-1}N) \subseteq S^{-1}K$. Thus by part (b),
$S^{-1}N\subseteq S^{-1}K$ or $(a/1)^t(S^{-1}N)=0$ for some $t \in \Bbb N$. Hence by Lemma \ref{l2.1}, $s_1N \subseteq K$ or $(s_2a)^tN=0$ for some $s_1,s_2 \in S$. By part (b), there is an $s \in S$ such that $sN \subseteq (s_1)^tN$ and $sN \subseteq (s_2)^tN \subseteq (0:_Ma^t)$. Therefore, $sN \subseteq K$ or $(as)^tN \subseteq sa^tN=0$, as desired.
\end{proof}

\begin{thm}\label{t2.8}
Let $S$ be a  multiplicatively closed subset of $R$ and $N$ be a submodule of an $R$-module $M$ such that $\sqrt{Ann_R(N)} \cap S=\emptyset$. Then $N$ is an $S$-secondary submodule of $M$ if and only if $sN$ is a secondary submodule of $M$ for some $s \in S$.
\end{thm}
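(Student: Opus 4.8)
The plan is to deduce both implications from the characterization in Theorem~\ref{t2.3}(a) --- that $N$ is $S$-secondary exactly when there is a fixed $s\in S$ with $srN=sN$ or $(sr)^tN=0$ (some $t\in\Bbb N$) for every $r\in R$ --- together with the monotonicity recorded in Lemma~\ref{t22.8}(a). I would first observe that for any $s\in S$ the standing hypothesis $\sqrt{Ann_R(N)}\cap S=\emptyset$ forces $sN\neq 0$, since $sN=0$ would put $s\in Ann_R(N)\subseteq\sqrt{Ann_R(N)}$; thus the nonzeroness required in the definition of a secondary submodule is automatic.

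For the ``if'' direction I would take a fixed $s\in S$ with $sN$ secondary and show that this same $s$ witnesses the $S$-secondary property via Theorem~\ref{t2.3}(a). Fix $r\in R$. Since $sN$ is secondary, either $r(sN)=sN$, which is precisely $srN=sN$, or $r^n(sN)=0$ for some $n\in\Bbb N$; here $n\geq 1$ because $sN\neq 0$, and then $(sr)^nN=s^{n-1}\bigl(sr^nN\bigr)=s^{n-1}\bigl(r^n(sN)\bigr)=0$. Combined with $\sqrt{Ann_R(N)}\cap S=\emptyset$, Theorem~\ref{t2.3}(a) gives that $N$ is $S$-secondary.

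For the ``only if'' direction, assume $N$ is $S$-secondary and let $s\in S$ be a fixed witness for the equivalent conditions of Theorem~\ref{t2.3}; by the proof of Lemma~\ref{t22.8}(a) this $s$ also satisfies $sN\subseteq\acute{s}N$ for all $\acute{s}\in S$. I claim $sN$ is secondary. As noted, $sN\neq 0$. Fix $a\in R$ and apply condition (a) with $r=a$: either $asN=sN$, i.e.\ $a(sN)=sN$, or $(as)^tN=0$ for some $t\in\Bbb N$ (necessarily $t\geq 1$). In the second case, using $sN\subseteq s^tN$ --- this is Lemma~\ref{t22.8}(a) with $\acute{s}=s^t\in S$ --- we get $a^t(sN)\subseteq a^t(s^tN)=(as)^tN=0$, so multiplication by $a$ on $sN$ is nilpotent. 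Hence every $a\in R$ acts surjectively or nilpotently on $sN$, and $sN$ is a secondary submodule of $M$.

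The one step that genuinely uses the theory developed earlier, and the point I expect to be the main obstacle, is the passage in the ``only if'' direction from $(as)^tN=0$ to $a^t(sN)=0$: by itself $(as)^tN=0$ only controls $a^ts^tN$, not $a^tsN$, and it is exactly the inclusion $sN\subseteq s^tN$ from Lemma~\ref{t22.8}(a) that closes the gap. Everything else --- that $S$ is closed under taking powers, that scalar multiplication respects inclusions of submodules, and that the ``if'' argument never leaves the single chosen element $s$ --- is routine bookkeeping.
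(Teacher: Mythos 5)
Your proof is correct and follows essentially the same route as the paper: both directions use the fixed witness $s$ from the defining characterization of $S$-secondary submodules together with Lemma~\ref{t22.8}, and your ``if'' direction is the paper's argument almost verbatim (condition (a) in place of (b)). The only minor divergence is in the ``only if'' direction, where the paper applies the $S$-secondary condition to $as$ with $K=asN$ and invokes Lemma~\ref{t22.8}(b), stopping at $(as)^tN=0$, while you apply it directly to $a$ and use Lemma~\ref{t22.8}(a) (the inclusion $sN\subseteq s^tN$) to pass from $(as)^tN=0$ to $a^t(sN)=0$ --- thereby making explicit a small step that the paper leaves implicit.
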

\begin{proof}
Let $sN$ be a secondary submodule of $M$ for some $s \in S$. Let $aN \subseteq K$ for some $a \in R$ and a submodule $K$ of $M$.   As
$asN \subseteq K$ and $sN$ is a secondary submodule of $M$, we get that $sN \subseteq K$ or $a^tsN=0$ and so $(as)^tN=0$ for some $t \in \Bbb N$, as needed. Conversely, assume that $N$ is an $S$-secondary submodule of $M$. Then there is an $s\in  S$ such that if $aN \subseteq K$ for some $a \in R$ and a submodule $K$ of $M$, then $sN \subseteq K$ or $(sa)^tN=0$  for some $t \in \Bbb N$. Now we show that $sN$ is a
secondary submodule of $M$. Let $a \in R$. As $asN \subseteq asN$, by assumption, $sN \subseteq asN$ or $(as^2)^tN=0$  for some $t \in \Bbb N$. If $sN \subseteq asN$, then there is nothing to show. Assume that $sN \not\subseteq asN$. Then $(as^2)^tN=0$ and so $a \in (\sqrt{Ann_R(N)}:_Rs^2) \subseteq (\sqrt{Ann_R(N)}:_Rs)$ by Lemma \ref{t22.8} (b). Thus, we can conclude that $(as)^tN=0$  for some $t \in \Bbb N$, as desired.
\end{proof}

The set of all maximal ideals of $R$ is denoted by $Max(R)$.
\begin{thm}\label{t2.9}
Let $S$ be a  multiplicatively closed subset of $R$ and $N$ be a submodule of an $R$-module $M$ such that $\sqrt{Ann_R(N)} \subseteq Jac(R)$, where $Jac(R)$ is the Jacobson radical of $R$. Then the following statements are equivalent:
\begin{itemize}
\item [(a)] $N$ is a secondary submodule of $M$;
\item [(b)] $\sqrt{Ann_R(N)}$ is a prime ideal of $R$ and $N$ is an  $(R\setminus \mathfrak{M})$-secondary submodule of $M$ for each $\mathfrak{M} \in Max(R)$.
\end{itemize}
\end{thm}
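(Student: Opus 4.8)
The plan is to prove the two implications separately: $(a)\Rightarrow(b)$ will be essentially an application of Proposition \ref{p2.2}(a), while $(b)\Rightarrow(a)$ will rest on the principle that an ideal contained in no maximal ideal of $R$ must be all of $R$.

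\emph{For $(a)\Rightarrow(b)$.} Assuming $N$ is secondary, I would first recall the well-known fact that then $\mathfrak{p}:=\sqrt{Ann_R(N)}$ is prime: if $ab\in\mathfrak{p}$ with $a\notin\mathfrak{p}$, then $a^nN\neq0$ for all $n$, so $aN=N$ (as $N$ is secondary), hence $a^nN=N$ for all $n$; choosing $m$ with $(ab)^mN=0$ gives $b^mN=b^m a^mN=(ab)^mN=0$, so $b\in\mathfrak{p}$ (and $\mathfrak{p}\neq R$ since $N\neq0$). Then, fixing $\mathfrak{M}\in Max(R)$ and setting $S_{\mathfrak{M}}:=R\setminus\mathfrak{M}$, the hypothesis $\sqrt{Ann_R(N)}\subseteq Jac(R)\subseteq\mathfrak{M}$ gives $S_{\mathfrak{M}}\cap\sqrt{Ann_R(N)}=\emptyset$, so Proposition \ref{p2.2}(a) yields that $N$ is an $S_{\mathfrak{M}}$-secondary submodule of $M$.

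\emph{For $(b)\Rightarrow(a)$.} Put $\mathfrak{p}:=\sqrt{Ann_R(N)}$, prime by hypothesis. First note $N\neq0$, since otherwise $\sqrt{Ann_R(N)}=R$ would meet $R\setminus\mathfrak{M}$ for any $\mathfrak{M}\in Max(R)$, contradicting that $N$ is $(R\setminus\mathfrak{M})$-secondary. Now let $a\in R$; the goal is $aN=N$ or $a^tN=0$ for some $t$. If $a\in\mathfrak{p}$ the latter holds, so assume $a\notin\mathfrak{p}$ and fix an arbitrary $\mathfrak{M}\in Max(R)$. Since $N$ is $(R\setminus\mathfrak{M})$-secondary, condition (a) of Theorem \ref{t2.3} gives a fixed $s_{\mathfrak{M}}\in R\setminus\mathfrak{M}$ with $s_{\mathfrak{M}}aN=s_{\mathfrak{M}}N$ or $(s_{\mathfrak{M}}a)^tN=0$ for some $t$. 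The second alternative would force $s_{\mathfrak{M}}a\in\mathfrak{p}$, hence $s_{\mathfrak{M}}\in\mathfrak{p}\subseteq Jac(R)\subseteq\mathfrak{M}$ (using that $\mathfrak{p}$ is prime and $a\notin\mathfrak{p}$), which is impossible. So $s_{\mathfrak{M}}N=s_{\mathfrak{M}}aN\subseteq aN$, i.e. $s_{\mathfrak{M}}\in(aN:_RN)$, whence $(aN:_RN)\not\subseteq\mathfrak{M}$. As $\mathfrak{M}$ was arbitrary, $(aN:_RN)$ is contained in no maximal ideal of $R$, so $(aN:_RN)=R$, giving $N=1\cdot N\subseteq aN\subseteq N$; thus $aN=N$ and $N$ is secondary.

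The main obstacle is the hard direction $(b)\Rightarrow(a)$, specifically the step converting the family of per-maximal-ideal witnesses $s_{\mathfrak{M}}$ into a statement about the single colon ideal $(aN:_RN)$ and then concluding $(aN:_RN)=R$. The hypothesis $\sqrt{Ann_R(N)}\subseteq Jac(R)$ is used precisely to eliminate the nilpotent alternative at each $\mathfrak{M}$ via $s_{\mathfrak{M}}\in\mathfrak{p}\subseteq\mathfrak{M}$; apart from that, the only points needing care are recording $N\neq0$ and the standard primeness of $\sqrt{Ann_R(N)}$ for secondary $N$, both of which are routine.
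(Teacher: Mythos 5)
Your proof is correct and follows essentially the same route as the paper: $(a)\Rightarrow(b)$ is obtained from Proposition \ref{p2.2}(a) after noting $\sqrt{Ann_R(N)}\subseteq\mathfrak{M}$ for every $\mathfrak{M}\in Max(R)$, and $(b)\Rightarrow(a)$ eliminates the nilpotent alternative at each maximal ideal using primeness of $\sqrt{Ann_R(N)}$ and $\sqrt{Ann_R(N)}\subseteq Jac(R)$, then concludes that the witnesses $s_{\mathfrak{M}}$ cannot all lie in any maximal ideal. Your packaging via the colon ideal $(aN:_RN)=R$ is just a rephrasing of the paper's set $\Omega$ and its explicit expression $1=r_1s_{\mathfrak{M}_1}+\dots+r_ns_{\mathfrak{M}_n}$, so the two arguments coincide.
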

\begin{proof}
$(a)\Rightarrow (b)$ Let  $N$ be a secondary submodule of $M$. Since $\sqrt{Ann_R(N)} \subseteq Jac(R)$,  $\sqrt{Ann_R(N)} \subseteq \mathfrak{M}$  for each $\mathfrak{M} \in Max(R)$ and so $\sqrt{Ann_R(N)} \cap (R\setminus \mathfrak{M})=\emptyset$. Now the result follows from Proposition \ref{p2.2} (a).

$(b)\Rightarrow (a)$
Let $\sqrt{Ann_R(N)}$ be a prime ideal of $R$ and $N$ be an  $(R\setminus \mathfrak{M})$-secondary submodule of $M$ for each $\mathfrak{M} \in Max(R)$. Let $a \in R$ and $a \not \in \sqrt{Ann_R(N)}$. We show that $aN=N$. Let $\mathfrak{M} \in Max(R)$. Then as $aN \subseteq aN$,  there exists a fixed $s_{\mathfrak{M}} \in R \setminus \mathfrak{M}$ such that
$s_{\mathfrak{M}} N \subseteq aN$ or $(s_{\mathfrak{M}}a)^tN=0$ for some $t \in \Bbb N$. As $\sqrt{Ann_R(N)}$ is a prime ideal of $R$ and $s_{\mathfrak{M}} \not \in \sqrt{Ann_R(N)}$,  we have $as_{\mathfrak{M}} \not \in \sqrt{Ann_R(N)}$ and so $s_{\mathfrak{M}} N \subseteq aN$. Now consider the set
$$
\Omega =\{s_{\mathfrak{M}} : \exists \: \mathfrak{M} \in Max(R), s_{\mathfrak{M}}  \not \in \mathfrak{M} \: and \: s_{\mathfrak{M}}N \subseteq aN\}.
$$
Then we claim that $\Omega= R$. To see this, take any maximal ideal $\acute{\mathfrak{M}}$  containing
$\Omega$. Then the definition of $\Omega$ requires that there exists a fixed $s_{\acute{\mathfrak{M}}} \in \Omega$ and $s_{\acute{\mathfrak{M}}} \not \in \acute{\mathfrak{M}}$. As $\Omega \subseteq \acute{\mathfrak{M}}$, we have
$s_{\acute{\mathfrak{M}}}  \in \Omega \subseteq \acute{\mathfrak{M}}$, a contradiction. Thus, $\Omega= R$ and this yields
$$
 1 = r_1s_{\mathfrak{M_1}} +  r_2s_{\mathfrak{M_2}} +... + r_ns_{\mathfrak{M_n}}
$$
for some $r_i \in R$ and $s_{\mathfrak{M_i}} \in R \setminus \mathfrak{M_i}$ with $s_{\mathfrak{M_i}}N \subseteq aN$, where $\mathfrak{M_i} \in Max(R)$ for each $i = 1, 2, ...,n$. This yields that
$$
 N = (r_1s_{\mathfrak{M_1}} +  r_2s_{\mathfrak{M_2}} +... + r_ns_{\mathfrak{M_n}})N \subseteq aN.
$$
Therefore, $N \subseteq aN$ as needed.
\end{proof}

Now we determine all secondary submodules of a module over a quasilocal ring in terms of $S$-secondary submodules.

\begin{cor}\label{cc2.9}
Let $S$ be a  multiplicatively closed subset of  a quasilocal ring $(R, \mathfrak{M})$ and $N$ be a submodule of an $R$-module $M$. Then the following statements are equivalent:
\begin{itemize}
\item [(a)] $N$ is a secondary submodule of $M$;
\item [(b)] $\sqrt{Ann_R(N)}$ is a prime ideal of $R$ and $N$ is an  $(R\setminus \mathfrak{M})$-secondary submodule of $M$.
\end{itemize}
\end{cor}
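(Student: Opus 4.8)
The plan is to deduce this corollary as a special case of Theorem~\ref{t2.9}. The only hypothesis in Theorem~\ref{t2.9} that is not automatically present here is $\sqrt{Ann_R(N)} \subseteq Jac(R)$; the implication $(b)\Rightarrow(a)$ and $(a)\Rightarrow(b)$ would then follow verbatim, noting that in a quasilocal ring $(R,\mathfrak{M})$ the set $Max(R)$ is the singleton $\{\mathfrak{M}\}$, so the phrase ``for each $\mathfrak{M}\in Max(R)$'' collapses to the single condition about $N$ being $(R\setminus\mathfrak{M})$-secondary.

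So the first thing I would do is observe that $Jac(R) = \mathfrak{M}$ when $(R,\mathfrak{M})$ is quasilocal, since the Jacobson radical is the intersection of all maximal ideals. Next I would check the remaining gap: Theorem~\ref{t2.9} assumes $\sqrt{Ann_R(N)}\subseteq Jac(R)$, whereas the corollary makes no such assumption. This is the step I expect to need a small remark. In direction $(a)\Rightarrow(b)$ one wants to apply Proposition~\ref{p2.2}(a), which only requires $S\cap\sqrt{Ann_R(N)}=\emptyset$ with $S = R\setminus\mathfrak{M}$; but $S\cap\sqrt{Ann_R(N)}=\emptyset$ is precisely $\sqrt{Ann_R(N)}\subseteq\mathfrak{M}$, and since $N$ is a secondary submodule it is nonzero, whence $\sqrt{Ann_R(N)}$ is a proper ideal of the quasilocal ring $R$ and therefore $\sqrt{Ann_R(N)}\subseteq\mathfrak{M}$ automatically. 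Also a secondary submodule has $\sqrt{Ann_R(N)}$ prime by the standard theory of secondary submodules. Thus $(a)\Rightarrow(b)$ goes through. For $(b)\Rightarrow(a)$, I would run the argument of Theorem~\ref{t2.9}'s $(b)\Rightarrow(a)$ with the single maximal ideal $\mathfrak{M}$: given $a\in R$ with $a\notin\sqrt{Ann_R(N)}$, the $(R\setminus\mathfrak{M})$-secondary hypothesis produces a fixed $s\in R\setminus\mathfrak{M}$ with $sN\subseteq aN$ or $(sa)^tN=0$; primeness of $\sqrt{Ann_R(N)}$ together with $s\notin\sqrt{Ann_R(N)}$ (valid because $\sqrt{Ann_R(N)}\subseteq\mathfrak{M}$, so $s\in R\setminus\mathfrak{M}$ forces $s\notin\sqrt{Ann_R(N)}$) rules out the second alternative, giving $sN\subseteq aN$. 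Since $s\notin\mathfrak{M}$ and $R$ is quasilocal, $s$ is a unit, so $N = sN\subseteq aN\subseteq N$, i.e. $aN=N$, and $N$ is secondary.

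The main (minor) obstacle is thus the bookkeeping around whether $\sqrt{Ann_R(N)}\subseteq Jac(R)$ is automatic: in a quasilocal ring it holds iff $\sqrt{Ann_R(N)}$ is a proper ideal, which is guaranteed in $(a)$ by $N\neq 0$ and in $(b)$ by the hypothesis that $\sqrt{Ann_R(N)}$ is prime (primes being proper). Once that is noted, the corollary is an immediate instance of Theorem~\ref{t2.9}. In fact the quasilocal case is cleaner than the general theorem, because the unit $s\notin\mathfrak{M}$ lets us conclude $N=aN$ in one line rather than going through the $\Omega = R$ covering argument; I would either present this streamlined version directly or simply write ``This follows from Theorem~\ref{t2.9}, since in a quasilocal ring $(R,\mathfrak{M})$ we have $Jac(R)=\mathfrak{M}$ and $\sqrt{Ann_R(N)}\subseteq\mathfrak{M}$ in both cases, and $Max(R)=\{\mathfrak{M}\}$.''
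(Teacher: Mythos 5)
Your proposal is correct and follows the paper's own route: the paper simply cites Theorem~\ref{t2.9}, exactly as you do. Your additional check that the hypothesis $\sqrt{Ann_R(N)}\subseteq Jac(R)=\mathfrak{M}$ is automatic in both directions (properness of $\sqrt{Ann_R(N)}$ from $N\neq 0$ in (a), and from primeness, or from the disjointness built into the $S$-secondary definition, in (b)) is a worthwhile detail the paper leaves implicit, and your streamlined one-line argument via $s$ being a unit is a valid simplification in the quasilocal case.
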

\begin{proof}
This follows from Theorem \ref{t2.9}.
\end{proof}

\begin{prop}\label{t2.16}
Let $S$ be a  multiplicatively closed subset of $R$ and $f : M \rightarrow \acute{M}$ be a monomorphism of R-modules. Then we have the following.
\begin{itemize}
  \item [(a)] If $N$ is an $S$-secondary submodule of $M$, then $f(N)$ is an $S$-secondary submodule of $\acute{M}$.
  \item [(b)] If $\acute{N}$ is an $S$-secondary submodule of $\acute{M}$ and $\acute{N} \subseteq f(M)$, then $f^{-1}(\acute{N})$ is an $S$-secondary submodule of $M$.
 \end{itemize}
\end{prop}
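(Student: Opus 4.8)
The plan is to verify the defining condition of Theorem~\ref{t2.3}(b) directly for $f(N)$ and for $f^{-1}(\acute N)$, moving the relevant containments back and forth along $f$. Two elementary observations about a monomorphism $f$ will be used throughout. First, $Ann_R(N)=Ann_R(f(N))$, and if $\acute N\subseteq f(M)$ then $f$ restricts to an isomorphism of $f^{-1}(\acute N)$ onto $\acute N$, so $Ann_R(f^{-1}(\acute N))=Ann_R(\acute N)$; hence the condition $\sqrt{Ann_R(-)}\cap S=\emptyset$ transfers in both directions and the hypotheses needed to even speak of $S$-secondariness are in place. Second, injectivity gives $f^{-1}(f(K))=K$ for every submodule $K\subseteq M$, while in general only $f(f^{-1}(\acute K))=\acute K\cap f(M)\subseteq\acute K$ holds, with equality precisely when $\acute K\subseteq f(M)$.

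For part (a), I would fix the element $s\in S$ witnessing that $N$ is $S$-secondary and take $r\in R$ and a submodule $\acute K$ of $\acute M$ with $r\,f(N)\subseteq\acute K$. Since $f(rN)=r\,f(N)\subseteq\acute K$, we get $rN\subseteq f^{-1}(\acute K)$, and the $S$-secondariness of $N$ yields either $(rs)^tN=0$ for some $t\in\Bbb N$, in which case $(rs)^t f(N)=f\big((rs)^tN\big)=0$, or $sN\subseteq f^{-1}(\acute K)$, in which case $s\,f(N)=f(sN)\subseteq f\big(f^{-1}(\acute K)\big)\subseteq\acute K$. So $f(N)$ satisfies Theorem~\ref{t2.3}(b) with the same $s$.

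For part (b), I would fix $s\in S$ witnessing that $\acute N$ is $S$-secondary and take $r\in R$ and a submodule $K$ of $M$ with $r\,f^{-1}(\acute N)\subseteq K$. Applying $f$ and using $\acute N\subseteq f(M)$, so that $f\big(f^{-1}(\acute N)\big)=\acute N$, gives $r\acute N=f\big(r\,f^{-1}(\acute N)\big)\subseteq f(K)$. The $S$-secondariness of $\acute N$ then gives either $(rs)^t\acute N=0$ for some $t$, whence $f\big((rs)^t f^{-1}(\acute N)\big)=(rs)^t\acute N=0$ and injectivity forces $(rs)^t f^{-1}(\acute N)=0$, or $s\acute N\subseteq f(K)$, whence $s\,f^{-1}(\acute N)\subseteq f^{-1}(s\acute N)\subseteq f^{-1}(f(K))=K$. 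Thus $f^{-1}(\acute N)$ satisfies Theorem~\ref{t2.3}(b).

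I do not expect a real obstacle here: the argument is a routine ``transport of structure'' along $f$. The one point that needs care is distinguishing the genuine equality $f^{-1}(f(K))=K$ from the mere inclusion $f(f^{-1}(\acute K))\subseteq\acute K$, and recognizing that the hypothesis $\acute N\subseteq f(M)$ in (b) is exactly what upgrades the latter to the equality $f(f^{-1}(\acute N))=\acute N$ that the proof of (b) needs; dropping that hypothesis would break the argument.
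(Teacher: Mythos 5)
Your proof is correct and takes essentially the same route as the paper: a direct transport of the defining condition along the monomorphism $f$, using $Ann_R(f(N))=Ann_R(N)$ (and $Ann_R(f^{-1}(\acute{N}))=Ann_R(\acute{N})$ when $\acute{N}\subseteq f(M)$) to carry over the disjointness hypothesis $\sqrt{Ann_R(-)}\cap S=\emptyset$. The only difference is that the paper checks characterization (a) of Theorem \ref{t2.3} (for each $r$, $srN=sN$ or $(sr)^tN=0$), which avoids tracking an auxiliary submodule, whereas you check characterization (b); your version has the minor merit of making explicit exactly where injectivity and the hypothesis $\acute{N}\subseteq f(M)$ are used, steps the paper's part (b) leaves implicit.
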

\begin{proof}
(a) Since  $\sqrt{Ann_R(N)} \cap S=\emptyset$ and $f$ is a monomorphism of R-modules, we have $\sqrt{Ann_R(f(N))} \cap S=\emptyset$. Let $r \in R$. Since $N$ is an $S$-secondary submodule of $M$, there exists a fixed $s \in S$ such that $srN=sN$ or $(sr)^tN=0$ for some $t \in \Bbb N$. Thus
 $srf(N)=sf(N)$ or $(sr)^tf(N)=0$, as needed.

(b)  $\sqrt{Ann_R(\acute{N})} \cap S=\emptyset$ implies that $\sqrt{Ann_R(f^{-1}(\acute{N}))} \cap S=\emptyset$.
Now let $r \in R$. As $\acute{N}$ is an $S$-secondary submodule of $\acute{M}$, there exists a fixed $s \in S$ such that $sr\acute{N}=s\acute{N}$ or $(sr)^t\acute{N}=0$ for some $t \in \Bbb N$.
Therefore $srf^{-1}(\acute{N})=sf^{-1}(\acute{N})$ or $(sr)^tf^{-1}(\acute{N})=0$, as requested.
\end{proof}

An $R$-module $M$ is said to be a \emph{comultiplication module} if for every submodule $N$ of $M$ there exists an ideal $I$ of $R$ such that $N=(0:_MI)$, equivalently, for each submodule $N$ of $M$, we have $N=(0:_MAnn_R(N))$ \cite{AF07}.
\begin{prop}\label{p2.17}
Let $S$ be a  multiplicatively closed subset of $R$, $M$ a comultiplication $R$-module, and  let $N$ be an $S$-secondary submodule of $M$. Suppose that $N \subseteq K+H$ for some
submodules $K, H$ of $M$. Then $s(0:_M\sqrt{Ann_R(N)})\subseteq K$ or $s(0:_M\sqrt{Ann_R(N)}) \subseteq H$ for some $s \in S$.
\end{prop}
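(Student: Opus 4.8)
The plan is to convert the submodule inclusion $N\subseteq K+H$ into an inclusion of ideals, use the fact that $\sqrt{Ann_R(N)}$ is $S$-prime (Proposition~\ref{p2.4}) to split that inclusion, and then push the conclusion back down to submodules via the comultiplication identity $L=(0:_M Ann_R(L))$.

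Concretely, set $I:=\sqrt{Ann_R(N)}$; by Proposition~\ref{p2.4} this is an $S$-prime ideal of $R$, so fix $s\in S$ witnessing the $S$-primeness of $I$. First I would check that $Ann_R(K)\,Ann_R(H)\subseteq I$: for $a\in Ann_R(K)$, $b\in Ann_R(H)$ and any $n\in N$, write $n=k+h$ with $k\in K$, $h\in H$, so $abn=b(ak)+a(bh)=0$, hence $ab\in Ann_R(N)\subseteq I$. Next I would upgrade the defining (element-wise) $S$-primeness of $I$ to the ideal level: if $s\,Ann_R(K)\not\subseteq I$, pick $a\in Ann_R(K)$ with $sa\notin I$; then for every $b\in Ann_R(H)$ the relation $ab\in I$ forces $sb\in I$, so $s\,Ann_R(H)\subseteq I$. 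Thus either $s\,Ann_R(K)\subseteq I$ or $s\,Ann_R(H)\subseteq I$.

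Assume $s\,Ann_R(K)\subseteq I$ (the other case is symmetric). Since $M$ is a comultiplication module, $K=(0:_M Ann_R(K))$. For any $m\in(0:_M I)$ and any $a\in Ann_R(K)$ we have $sa\in I$, hence $a(sm)=(sa)m=0$; therefore $sm\in(0:_M Ann_R(K))=K$. This gives $s(0:_M\sqrt{Ann_R(N)})=s(0:_M I)\subseteq K$, and the symmetric alternative yields $s(0:_M I)\subseteq H$, which is exactly the assertion.

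The one point that seems to need genuine care is passing from the element-wise form of $S$-primeness (with its single fixed $s$) to the ideal-theoretic dichotomy $J_1J_2\subseteq I\Rightarrow sJ_1\subseteq I\text{ or }sJ_2\subseteq I$ with the \emph{same} $s$; everything else is a routine use of the comultiplication identity together with the observation $Ann_R(K)\,Ann_R(H)\subseteq Ann_R(N)$. One could also bypass Proposition~\ref{p2.4} and apply Theorem~\ref{t2.3}(d) directly to $Ann_R(K)\cdot N\subseteq H$ (note $Ann_R(K)N\subseteq Ann_R(K)K+Ann_R(K)H\subseteq H$), obtaining $s\,Ann_R(K)\subseteq I$ or $sN\subseteq H$; in the latter case the comultiplication inclusion $(0:_M I)\subseteq(0:_M Ann_R(N))=N$ gives $s(0:_M I)\subseteq sN\subseteq H$.
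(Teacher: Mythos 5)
Your proof is correct and follows essentially the same route as the paper: both pass from $N\subseteq K+H$ to $Ann_R(K)\,Ann_R(H)\subseteq\sqrt{Ann_R(N)}$, invoke Proposition~\ref{p2.4} to split this with a single $s\in S$, and finish with the comultiplication identity $K=(0:_MAnn_R(K))$, $H=(0:_MAnn_R(H))$. The only difference is that you explicitly verify the element-to-ideal upgrade of $S$-primeness (same fixed $s$), a step the paper leaves implicit, which is a worthwhile clarification but not a different argument.
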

\begin{proof}
As $N \subseteq K+H$, we have $Ann_R(K)Ann_R(H) \subseteq \sqrt{Ann_R(N)}$. This implies that there exists a fixed $s \in S$  such that
 $sAnn_R(K)\subseteq \sqrt{Ann_R(N)}$ or  $sAnn_R(H) \subseteq \sqrt{Ann_R(N)}$ since by Proposition \ref{p2.4}, $\sqrt{Ann_R(N)}$ is an $S$-prime ideal of $R$. Therefore, $(0:_M\sqrt{Ann_R(N)})\subseteq ((0:_MAnn_R(K)):_Ms)$ or $(0:_M\sqrt{Ann_R(N)})\subseteq ((0:_MAnn_R(H)):_Ms)$. Now as $M$ is a comultiplication $R$-module, we have  $s(0:_M\sqrt{Ann_R(N)}) \subseteq K$ or $s(0:_M\sqrt{Ann_R(N)}) \subseteq H$ as needed.
\end{proof}

Let $M$ be an $R$-module. The idealization $R(+)M =\{(a,m): a \in R, m \in  M\}$ of $M$ is
a commutative ring whose addition is componentwise and whose multiplication is defined as $(a,m)(b,\acute{m}) =
(ab, a\acute{m} + bm)$ for each $a, b \in R$, $m, \acute{m}\in M$ \cite{Na62}. If $S$ is a  multiplicatively closed subset of $R$ and $N$ is a submodule of $M$, then $S(+)N = \{(s, n): s \in S, n \in N\}$ is a  multiplicatively closed subset of $R(+)M$ \cite{DD02}.

\begin{prop}\label{p2.18}
Let $M$ be an $R$-module and let $I$ be an ideal of $R$ such that $I \subseteq Ann_R(M)$.  Then the following are equivalent:
\begin{itemize}
\item [(a)] $I$ is a secondary ideal of $R$;
\item [(b)] $I(+)0$ is a secondary ideal of $R(+)M$.
\end{itemize}
\end{prop}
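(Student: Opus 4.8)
The plan is to argue directly from the definition of a secondary ideal together with the multiplication rule of the idealization, using the hypothesis $I\subseteq Ann_R(M)$ to collapse everything to the first coordinate. First I would record the elementary facts. Since $I\subseteq Ann_R(M)$, for $a\in I$, $r\in R$ and $m\in M$ we have $am=0$, so $(r,m)(a,0)=(ra,\,r\cdot 0+ma)=(ra,0)$. In particular $I(+)0$ is closed under multiplication by $R(+)M$, hence is an ideal, and $(r,m)\bigl(I(+)0\bigr)=(rI)(+)0$ for every $(r,m)\in R(+)M$. Moreover $I(+)0\neq 0$ exactly when $I\neq 0$, and an easy induction gives $(r,m)^{k}=(r^{k},\,kr^{k-1}m)$ for all $k\in\Bbb N$, whence $(r,m)^{k}(a,0)=(r^{k}a,\,kr^{k-1}ma)=(r^{k}a,0)$, again because $a\in Ann_R(M)$; thus $(r,m)^{k}\bigl(I(+)0\bigr)=(r^{k}I)(+)0$.

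For $(a)\Rightarrow(b)$, assume $I$ is a secondary ideal of $R$, so $I\neq 0$ and therefore $I(+)0\neq 0$. Given $(r,m)\in R(+)M$, apply the secondary condition for $I$ to the element $r$: either $rI=I$, in which case $(r,m)\bigl(I(+)0\bigr)=(rI)(+)0=I(+)0$; or $r^{n}I=0$ for some $n\in\Bbb N$, in which case $(r,m)^{n}\bigl(I(+)0\bigr)=(r^{n}I)(+)0=0$. Hence $I(+)0$ is a secondary ideal of $R(+)M$.

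For $(b)\Rightarrow(a)$, assume $I(+)0$ is a secondary ideal of $R(+)M$, so $I(+)0\neq 0$ and therefore $I\neq 0$. Given $r\in R$, test the secondary condition of $I(+)0$ on the particular element $(r,0)$: either $(r,0)\bigl(I(+)0\bigr)=I(+)0$, that is $(rI)(+)0=I(+)0$, which forces $rI=I$; or $(r,0)^{n}\bigl(I(+)0\bigr)=0$ for some $n\in\Bbb N$, that is $(r^{n}I)(+)0=0$, which forces $r^{n}I=0$. Hence $I$ is a secondary ideal of $R$.

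I do not expect a genuine obstacle here; the only point requiring care is to verify that $I(+)0$ is actually an ideal of $R(+)M$ and that raising elements to powers never produces a nonzero $M$-component in the relevant products, and both of these are precisely what the hypothesis $I\subseteq Ann_R(M)$ supplies.
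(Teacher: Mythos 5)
Your proof is correct: the computations $(r,m)(a,0)=(ra,0)$ and $(r,m)^{k}(a,0)=(r^{k}a,0)$ (valid because $I\subseteq Ann_R(M)$) reduce both implications to the definition of a secondary ideal, exactly the direct verification the paper leaves to the reader with the remark ``This is straightforward.'' No gaps; this is essentially the intended argument, just written out.
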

\begin{proof}
This is straightforward.
\end{proof}

\begin{thm}\label{t2.19}
Let $S$ be a  multiplicatively closed subset of $R$, $M$ be an $R$-module, and $I$ be an ideal of $R$ such that $I \subseteq Ann_R(M)$ and $I \cap S=\emptyset$. Then the following are equivalent:
\begin{itemize}
\item [(a)] $I$ is an $S$-secondary ideal of $R$;
\item [(b)] $I(+)0$ is an $S(+)0$-secondary ideal of $R(+)M$;
\item [(c)] $I(+)0$ is an $S(+)M$-secondary ideal of $R(+)M$.
\end{itemize}
\end{thm}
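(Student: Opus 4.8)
The plan is to prove the chain of equivalences $(a)\Leftrightarrow(b)$ and $(b)\Leftrightarrow(c)$ by translating statements about multiplication of elements of $R$ on the ideal $I$ into statements about multiplication of elements of the idealization $R(+)M$ on the ideal $I(+)0$, exploiting the fact that $I\subseteq Ann_R(M)$ makes the cross-terms in the idealization product vanish. First I would record the basic computation: for $(a,m)\in R(+)M$ and $(x,0)\in I(+)0$ we have $(a,m)(x,0)=(ax, mx)=(ax,0)$, since $x\in I\subseteq Ann_R(M)$ forces $mx=0$. More generally $(a,m)(I(+)0)=aI(+)0$, and by induction $((a,m)(s,n))^t(I(+)0)=(as)^tI(+)0$ for any $t\in\mathbb{N}$. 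This also shows $\sqrt{Ann_{R(+)M}(I(+)0)}=\sqrt{Ann_R(I)}(+)M$ (using that $(a,m)^k=(a^k,ka^{k-1}m)$, so $(a,m)$ kills $I(+)0$ up to radical iff $a\in\sqrt{Ann_R(I)}$), whence $I\cap S=\emptyset$ translates into the disjointness hypotheses needed for $I(+)0$ to be eligible to be $S(+)0$-secondary or $S(+)M$-secondary, since $S(+)0$ and $S(+)M$ both meet $\sqrt{Ann_{R(+)M}(I(+)0)}$ exactly when $S$ meets $\sqrt{Ann_R(I)}$.

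With these translations in hand, $(a)\Leftrightarrow(b)$ is nearly immediate from characterization (a) of Theorem \ref{t2.3}: if $I$ is $S$-secondary with witness $s\in S$, then for any $(a,m)\in R(+)M$ we have $(s,0)(a,m)(I(+)0)=saI(+)0$, which equals $sI(+)0=(s,0)(I(+)0)$ or satisfies $((s,0)(a,m))^t(I(+)0)=(sa)^tI(+)0=0$ for some $t$, according as $saI=sI$ or $(sa)^tI=0$; so $(s,0)$ witnesses that $I(+)0$ is $S(+)0$-secondary. Conversely, given a witness $(s,n)\in S(+)0$ — so $n=0$ — for $I(+)0$, testing it against $(a,0)$ recovers exactly the condition that $s$ witnesses $I$ being $S$-secondary. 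For $(b)\Leftrightarrow(c)$: since $S(+)0\subseteq S(+)M$, Proposition \ref{p2.2}(b) together with the disjointness fact above gives $(b)\Rightarrow(c)$ directly. For $(c)\Rightarrow(b)$, take a witness $(s,n)\in S(+)M$ for $I(+)0$ being $S(+)M$-secondary; I would argue that $(s,0)$ then works as a witness in $S(+)0$, because for every $(a,m)$ the element $(s,0)(a,m)(I(+)0)=saI(+)0$ depends only on $sa$, the same quantity controlled by $(s,n)(a,m)(I(+)0)=saI(+)0$, so the two conditions ``$saI=sI$ or $(sa)^tI=0$'' produced by the $(s,n)$-witness transfer verbatim to $(s,0)$.

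The one place that needs a little care, and which I expect to be the main (mild) obstacle, is checking that the membership/inclusion conditions really are insensitive to the second coordinate $n$ of the witness and to the second coordinates of arbitrary test elements — i.e. verifying cleanly that $(s,n)(I(+)0)=sI(+)0$ regardless of $n$, and that $\sqrt{Ann_{R(+)M}(I(+)0)}$ has the product form claimed, so that $sa\in\sqrt{Ann_R(I)}$ is genuinely equivalent to $(s,n)(a,m)\in\sqrt{Ann_{R(+)M}(I(+)0)}$. All of this follows from $I\subseteq Ann_R(M)$ and the nilpotent-in-second-coordinate behaviour $(a,m)^k=(a^k,ka^{k-1}m)$ of idealization elements, so once that computation is in place the three equivalences drop out mechanically, and I would present the argument as the two short implications $(a)\Leftrightarrow(b)$ and $(c)\Rightarrow(b)$ plus the one-line $(b)\Rightarrow(c)$ via Proposition \ref{p2.2}(b).
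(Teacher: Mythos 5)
Your proposal is correct and follows essentially the same route as the paper: reduce everything to the computation that $I\subseteq Ann_R(M)$ kills the second coordinates, so $(a,m)(s,n)(I(+)0)=asI(+)0$ and $((a,m)(s,n))^t(I(+)0)=(as)^tI(+)0$, and get $(b)\Rightarrow(c)$ from Proposition \ref{p2.2}(b). The only differences are cosmetic — you close the loop via $(c)\Rightarrow(b)\Rightarrow(a)$ instead of the paper's direct $(c)\Rightarrow(a)$, and you explicitly check the equivalence of the disjointness conditions $\sqrt{Ann}\cap S=\emptyset$, which the paper leaves implicit.
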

\begin{proof}
$(a)\Rightarrow (b)$
Let $(r, m ) \in R(+)M$. As $I$ is an $S$-secondary ideal of $R$, there exists a fixed $s \in S$ such that $rsI=sI$ or $(rs)^tI=0$  for some $t \in \Bbb N$.
If  $(rs)^tI=0$, then $((r,m)(s,0))^t(I(+)0)=0$. If $rsI=sI$, then we claim that $(r,m)(s,0)(I(+)0)=(s,0)(I(+)0)$. To see this let
$(sa,0)=(s,0)(a,0) \in (s,0)(I(+)0)$. As $rsI=sI$, we have $sa=rsb$ for some $b \in I$. Thus as $b \in I \subseteq Ann_R(M)$,
$$
(sa,0)=(srb,0)=(srb,smb)=(sr,sm)(b,0)=(s,0)(r,m)(b,0).
$$
Hence,
$(s,0)(a,0) \in (r,m)(s,0)(I(+)0)$ and so $(s,0)(I(+)0) \subseteq (r,m)(s,0)(I(+)0)$.
Since the inverse inclusion is clear we reach the claim.

$(b)\Rightarrow (c)$
Since $S(+)0 \subseteq S(+)M$, the result follows from Proposition \ref{p2.2} (b).

$(c)\Rightarrow (a)$
Let $r \in R$. As $I(+)0$ is an $S(+)M$-secondary ideal of $R(+)M$, there exists a fixed $(s, m) \in S(+)M$ such that $(r,0)(s,m)(I(+)0)=(s,m)(I(+)0)$ or $((r,0)(s,m))^t(I(+)0)=0$ for some $t \in \Bbb N$.
If $((r,0)(s,m))^t(I(+)0)=0$, then
$$
0=((r,0)(s,m))^t(a,0)=(rs,rm)^t(a,0)=((rs)^ta,(rm)^ta)=((rs)^ta,0)
$$
far each $a \in I$. Thus $(rs)^tI=0$. If
$(r,0)(s,m)(I(+)0)=(s,m)(I(+)0)$, then we claim that $rsI=sI$. To see this let
$sa\in sI$. Then for some $b \in I$, we have
$$
(sa,0)=(sa,am)=(s,m)(a,0)=(s,m)(r,0)(b,0)=(srb,rmb)=(srb,0).
$$
Hence,
$sa \in rsI$ and so $sI \subseteq srI$, as needed.
\end{proof}

Let $P$ be a prime ideal of $R$ and $N$ be a
submodule of an $R$-module $M$. \emph{The $P$-interior of $N$ relative to $M$}
is defined (see \cite[2.7]{AF11}) as the set
$$
I^M_P(N)= \cap \{L \mid  L \\\ is \\\ a \\\ completely \\\
irreducible
\\\ submodule \\\ of \\\ M\\\ and
$$
$$
 rN\subseteq L \\\ for \\\ some \\\ r \in R-P \}.
$$
Let $R$ be an integral domain. A submodule $N$ of
an $R$-module $M$ is said to be a \emph{cotorsion-free submodule of $M$} (the dual of torsion-free) if $I^M_0(N)=N$ and is a \emph{cotorsion submodule of $M$} (the dual of torsion) if $I^M_0(N)=0$. Also, $M$ said to be \emph{cotorsion} (resp. \emph{cotorsion-free}) if $M$ is a cotorsion (resp. cotorsion-free) submodule of $M$ \cite{AF12}.

\begin{defn}\label{d2.3}
Let $M$ be an $R$-module and let $S$ be a  multiplicatively closed subset of $R$ such that $\sqrt{Ann_R(M)} \cap S=\emptyset$.  We say that $M$ is a \emph{quasi $S$-cotorsion-free module} in the case that there is an $s\in S$ such that if $rM \subseteq L$, where $r \in R$ and $L$ is a completely irreducible submodule of $M$, then $sM \subseteq L$ or $(rs)^t=0$ for some $t \in \Bbb N$.
\end{defn}

\begin{prop}\label{t2.3}
Let $M$ be an $R$-module and $S$ be a  multiplicatively closed subset of $R$.
Then the following statements are equivalent:
\begin{itemize}
  \item [(a)] $M$ is an $S$-secondary $R$-module;
  \item [(b)] $P=\sqrt{Ann_R(M)}$ is an $S$-prime ideal of $R$ and the $R/P$-module $M$ is
a quasi $S$-cotorsion-free module.
\end{itemize}
\end{prop}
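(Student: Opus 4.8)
The plan is to observe that, writing $P=\sqrt{Ann_R(M)}$, the quasi $S$-cotorsion-free condition of Definition~\ref{d2.3} applied to the $R/P$-module $M$ is, once unravelled, nothing but condition (c) of Theorem~\ref{t2.3} for the submodule $N=M$; the equivalence claimed in the proposition then follows by combining Theorem~\ref{t2.3} with a single application of Proposition~\ref{p2.4}. The mechanism behind the unravelling is that $P$ is a radical ideal: for any $x\in R$ one has $x\in P\iff x^{t}\in P$ for some $t\in\Bbb N\iff x^{t}M=0$ for some $t\in\Bbb N$. Hence the clause ``$(rs)^{t}=0$'' occurring in Definition~\ref{d2.3}, interpreted in $R/P$, is equivalent to ``$(rs)^{t}M=0$ for some $t\in\Bbb N$'', while the completely irreducible submodules of $M$ and the containments $rM\subseteq L$, $sM\subseteq L$ are unchanged on passing between $R$ and $R/P$.

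For $(a)\Rightarrow(b)$: suppose $M$ is an $S$-secondary $R$-module. By Proposition~\ref{p2.4}, $P=\sqrt{Ann_R(M)}$ is an $S$-prime ideal of $R$; in particular $P\cap S=\emptyset$, which is precisely the condition needed for Definition~\ref{d2.3} to be applicable to $M$ over $R/P$. Since $M$ is $S$-secondary, by Theorem~\ref{t2.3} there is a fixed $s\in S$ such that, for every $r\in R$ and every completely irreducible submodule $L$ of $M$, $rM\subseteq L$ forces $sM\subseteq L$ or $(rs)^{t}M=0$ for some $t\in\Bbb N$ (this is condition (c) there). Rewriting $(rs)^{t}M=0$ as $(rs)^{t}\in P$, i.e.\ as $(rs)^{t}=0$ in $R/P$, we see that this is precisely the statement that the $R/P$-module $M$ is quasi $S$-cotorsion-free, so (b) holds.

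For $(b)\Rightarrow(a)$: assume $P=\sqrt{Ann_R(M)}$ is an $S$-prime ideal of $R$ and that the $R/P$-module $M$ is quasi $S$-cotorsion-free. Either hypothesis already forces $P\cap S=\emptyset$, so $\sqrt{Ann_R(M)}\cap S=\emptyset$. Fix an $s\in S$ witnessing quasi $S$-cotorsion-freeness: whenever $rM\subseteq L$ with $r\in R$ and $L$ a completely irreducible submodule of $M$, either $sM\subseteq L$ or $(rs)^{t}=0$ in $R/P$, the latter meaning $(rs)^{t}\in P=\sqrt{Ann_R(M)}$ and hence $(rs)^{tm}M=0$ for some $m\in\Bbb N$. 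Thus this same $s$ verifies condition (c) of Theorem~\ref{t2.3} for $N=M$, and therefore $M$ is an $S$-secondary $R$-module by Definition~\ref{d2.1}.

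The step I expect to need the most care is not a computation but the interpretation of the phrase ``the $R/P$-module $M$'': in general $PM\neq0$ --- for instance the $S$-secondary $\Bbb Z$-module $\Bbb Z_{4}$ of Remark~\ref{r2.1}, where $P=2\Bbb Z$ and $2\Bbb Z_{4}\neq0$ --- so $M$ need not literally be a module over $R/P$, and Definition~\ref{d2.3} must then be read with scalars taken in $R$ and ``$(rs)^{t}=0$'' understood as ``$(rs)^{t}\in P$''. Because $P=\sqrt{Ann_R(M)}$, this is in any case the only sensible reading, and with it fixed both directions above become routine translations between condition (c) of Theorem~\ref{t2.3} and Definition~\ref{d2.3}; I anticipate no further obstacle.
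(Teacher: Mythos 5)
Your proof is correct and follows essentially the same route as the paper: apply Proposition~\ref{p2.4} to get that $P=\sqrt{Ann_R(M)}$ is $S$-prime, then translate condition (c) of Theorem~\ref{t2.3} for $N=M$ back and forth into Definition~\ref{d2.3}, reading ``$(rs)^t=0$ in $R/P$'' as $(rs)^t\in P$, i.e.\ $(rs)^{tm}M=0$ for some $m$. Your explicit remark that $M$ need not literally be an $R/P$-module (since $PM\neq 0$ in general) and that the definition must be read with the annihilation clause interpreted modulo $P$ is a point the paper glosses over, but your resolution is exactly the reading the paper's own proof uses.
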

\begin{proof}
$(a) \Rightarrow (b)$. By Proposition \ref{p2.4} (a), $\sqrt{Ann_R(M)}$ is an $S$-prime
ideal of $R$. Now let $L$ be a completely irreducible submodule
of $M$ and $r \in R$ such that $rM \subseteq L$.
Then there exists a fixed $s \in S$ such that $sM \subseteq L$ or $(rs)^tM=0$ for some $t \in \Bbb N$ because $M$ is $S$-secondary.
Thus $sM \subseteq L$ or $rs \in P=0_{R/P}$ as required.

$(b) \Rightarrow (a)$. As $\sqrt{Ann_R(M)}$ is an $S$-prime ideal of $R$, $\sqrt{Ann_R(M)} \cap S=\emptyset$. Suppose that there exist $r \in R$ and completely irreducible submodule $L$ of $M$ such that $rM\subseteq L$.
By assumption, there is an $s \in S$ such that $sM \subseteq L$ or $(rs)^t=0_{R/P}$ for some $t \in \Bbb N$. Thus  $sM \subseteq L$ or $rs \in P=\sqrt{Ann_R(M)}$ as needed.
\end{proof}

\begin{thm}\label{t2.21}
Let $M$ be a module over an integral domain $R$. Then the following are equivalent:
\begin{itemize}
\item [(a)]
 $M$ is a cotorsion-free $R$-module;
\item [(b)]
$M$ is a quasi $(R \setminus P)$-cotorsion-free for each prime ideal $P$ of $R$;
\item [(c)]
$M$ is a quasi $(R \setminus \mathfrak{M})$-cotorsion-free for each maximal ideal $\mathfrak{M}$ of $R$.
\end{itemize}
\end{thm}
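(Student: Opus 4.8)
The plan is to first rewrite the quasi $S$-cotorsion-free condition in a form tailored to a domain, and then to observe that the content of the theorem lies entirely in $(c)\Rightarrow(a)$, which is a routine local-to-global argument. Put $I:=I^M_0(M)$, so that $M$ is cotorsion-free precisely when $I=M$; note $I\subseteq M$ always, since each submodule in the defining intersection lies in $M$ (and an empty intersection is read as $M$). Since $R$ is a domain and every element of a multiplicatively closed subset of $R$ is nonzero, for $r\in R$ and $s\in S$ the clause ``$(rs)^t=0$ for some $t\in\Bbb N$'' of Definition \ref{d2.3} holds if and only if $r=0$. Hence, for any multiplicatively closed $S\subseteq R$, the module $M$ is quasi $S$-cotorsion-free if and only if $\sqrt{Ann_R(M)}\cap S=\emptyset$ and there is an $s\in S$ with $sM\subseteq I$; indeed $I$ is exactly the intersection of those completely irreducible submodules $L$ with $rM\subseteq L$ for some nonzero $r$, so $sM\subseteq I$ is equivalent to the requirement that $rM\subseteq L$ with $r\neq0$ forces $sM\subseteq L$, while the case $r=0$ imposes nothing. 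We may assume $M\neq0$.

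For $(a)\Rightarrow(b)$: if $I=M$ then for every prime ideal $P$ we have $1\cdot M\subseteq I$ with $1\in R\setminus P$; moreover $M$ cotorsion-free with $M\neq0$ forces multiplication by each nonzero scalar to be surjective on $M$, hence $Ann_R(M)=0$ and $\sqrt{Ann_R(M)}=0\subseteq P$, so $\sqrt{Ann_R(M)}\cap(R\setminus P)=\emptyset$. By the reformulation, $M$ is quasi $(R\setminus P)$-cotorsion-free. The implication $(b)\Rightarrow(c)$ is immediate because every maximal ideal is prime.

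The heart of the proof is $(c)\Rightarrow(a)$. Form the ideal $J:=(I:_RM)=\{r\in R:rM\subseteq I\}$ of $R$. For a maximal ideal $\mathfrak{M}$ of $R$, hypothesis $(c)$ and the reformulation give an $s_{\mathfrak{M}}\in R\setminus\mathfrak{M}$ with $s_{\mathfrak{M}}M\subseteq I$, that is, $s_{\mathfrak{M}}\in J\setminus\mathfrak{M}$; thus $J\not\subseteq\mathfrak{M}$. Since no maximal ideal of $R$ contains $J$, we get $J=R$, so $1\in J$ and $M\subseteq I$; together with $I\subseteq M$ this yields $I=M$, that is, $M$ is cotorsion-free. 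This is the same mechanism as the set $\Omega$ in the proof of Theorem \ref{t2.9}, now carried by the honest ideal $(I:_RM)$.

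I expect no genuine obstacle. The only steps needing care are in the first paragraph: verifying that the nilpotency clause collapses over a domain and that the resulting condition is equivalent to $sM\subseteq I^M_0(M)$; after that everything is formal, the crucial observation being simply that $(I^M_0(M):_RM)$ is an ideal not contained in any maximal ideal. One caveat worth recording is the degenerate module $M=0$: it is vacuously cotorsion-free but can never satisfy $\sqrt{Ann_R(M)}\cap S=\emptyset$, which is why the stated equivalences should be read for $M\neq0$, as assumed above.
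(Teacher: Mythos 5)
Your proposal is correct and takes essentially the same route as the paper: the decisive direction $(c)\Rightarrow(a)$ rests on the identical local-to-global mechanism (for each maximal ideal $\mathfrak{M}$ an element $s_{\mathfrak{M}}\notin\mathfrak{M}$ multiplying $M$ into the target submodule, so the ideal these generate lies in no maximal ideal and must be $R$), your colon ideal $\bigl(I^M_0(M):_R M\bigr)$ simply packaging in one stroke what the paper does per completely irreducible $L$ with its set $\Omega$, after the same observation that over a domain the nilpotency clause forces $r=0$. Your explicit verification of $(a)\Rightarrow(b)$ (via $Ann_R(M)=0$ for a nonzero cotorsion-free module) and the flagged degenerate case $M=0$ are details the paper passes over as ``clear,'' and they are handled correctly.
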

\begin{proof}
$(a)\Rightarrow (b)$
This is clear.

$(b)\Rightarrow (c)$
This is obvious.

$(c)\Rightarrow (a)$
Let  $M$ be quasi $(R \setminus \mathfrak{M})$-cotorsion-free for each maximal ideal $\mathfrak{M}$ of $R$. Let $aM \subseteq L$ for some $a \in  R$ and a completely irreducible submodule $L$ of $M$. Assume that $a \not=0$.  Take  a maximal ideal $\mathfrak{M}$ of $R$. As $M$ is quasi $(R \setminus \mathfrak{M})$-cotorsion-free, there exists a fixed $s_\mathfrak{M} \in R \setminus \mathfrak{M}$ such that
$s_\mathfrak{M}M \subseteq L$ or $(as_\mathfrak{M})^t=0$ for some $t \in \Bbb N$.  As $R$ is an integral domain, $(as_\mathfrak{M})^t \not=0$ and so $s_\mathfrak{M}M \subseteq L$. Now set
$$
\Omega =\{s_{\mathfrak{M}} : \exists \: \mathfrak{M} \in Max(R), s_{\mathfrak{M}}  \not \in \mathfrak{M} \: and \: s_{\mathfrak{M}}M \subseteq L\}.
$$
A similar argument as in the proof of Theorem \ref{t2.9} shows that $\Omega =R$. Thus we have $\langle s_{\mathfrak{M_1}}\rangle + \langle s_{\mathfrak{M_2}}\rangle + ... + \langle s_{\mathfrak{M_n}}\rangle = R$ for some  $s_{\mathfrak{M_i}} \in \Omega$. This implies that
 $M=(\langle s_{\mathfrak{M_1}}\rangle + \langle s_{\mathfrak{M_2}}\rangle + ... + \langle s_{\mathfrak{M_n}}\rangle)M \subseteq L$ and hence $M=L$. This means that $M$ is a cotorsion-free $R$-module.
\end{proof}

Let $M$ be an $R$-module. The dual notion of $Z_R(M)$, the
set of zero divisors of $M$, is denoted by $W_R(M)$ and defined by
$$
W(M)= \{ a \in R: aM \not =M \}.
$$

\begin{thm}\label{t2.10}
Let $S$ be a  multiplicatively closed subset of $R$ and $M$ be a finitely generated comultiplication $R$-module with $\sqrt{Ann_R(M)} \cap S=\emptyset$. If each non-zero submodule of $M$ is $S$-secondary, then $W_R(M)=\sqrt{Ann_R(M)}$.
\end{thm}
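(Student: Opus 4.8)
The plan is to prove the inclusions $\sqrt{Ann_R(M)}\subseteq W_R(M)$ and $W_R(M)\subseteq\sqrt{Ann_R(M)}$ separately. First I would record two preliminary facts. Since $1\in S$ and $\sqrt{Ann_R(M)}\cap S=\emptyset$, the ideal $Ann_R(M)$ is proper, so $M\not=0$; and since every non-zero submodule of $M$ is $S$-secondary, each such submodule has radical annihilator disjoint from $S$ by Definition \ref{d2.1}, so no element of $S$ can annihilate a non-zero submodule of $M$. The inclusion $\sqrt{Ann_R(M)}\subseteq W_R(M)$ is then immediate: if $a\in\sqrt{Ann_R(M)}$ with $a^nM=0$, then $aM=M$ would force $M=a^nM=0$, a contradiction, so $aM\not=M$.

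The substance of the proof is the reverse inclusion, and the crux is the claim that $sM=M$ for every $s\in S$. Suppose instead that $sM\not=M$ for some $s\in S$. Since $M$ is a comultiplication module, $sM=(0:_M Ann_R(sM))$, while $M=(0:_M Ann_R(M))$ always holds, so $sM\not=M$ forces $Ann_R(sM)\not=Ann_R(M)$; as $Ann_R(M)\subseteq Ann_R(sM)$, we may pick $b\in Ann_R(sM)\setminus Ann_R(M)$. Then $bM\not=0$ while $s(bM)=b(sM)=0$, so $s$ annihilates the non-zero submodule $bM$, contradicting the preliminary observation. I expect this claim to be the main obstacle, since it is the point where both the comultiplication hypothesis and the hypothesis on all non-zero submodules are genuinely used, and recognizing $bM$ — rather than some iterated power $b^kM$ — as the submodule to test against the definition is the key idea.

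Granting the claim, let $a\in R$ with $a\notin\sqrt{Ann_R(M)}$; I must show $aM=M$. Since $M$ is itself a non-zero submodule of $M$, it is $S$-secondary, so there is a fixed $s\in S$ with $saM=sM$ or $(sa)^tM=0$ for some $t\in\Bbb N$. In the second case $s^ta^tM=0$ with $s^t\in S$, so $a^tM=0$ by the preliminary observation, i.e.\ $a\in\sqrt{Ann_R(M)}$, a contradiction. Hence $saM=sM$, and since $sM=M$ by the claim, $aM=a(sM)=saM=sM=M$. Therefore $a\notin W_R(M)$, which proves $W_R(M)\subseteq\sqrt{Ann_R(M)}$; combined with the first inclusion this yields $W_R(M)=\sqrt{Ann_R(M)}$. (Finite generation of $M$ is not actually used in this argument.)
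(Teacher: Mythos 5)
Your proof is correct, and it takes a genuinely different route from the paper's. The paper argues directly with an element $a\in W_R(M)$: applying $S$-secondariness of $M$, in the case $saM=sM$ it introduces the auxiliary submodule $N=(0:_M(saM:_RM))$, notes $s\in Ann_R(N)\cap S$ forces $N=0$, uses the comultiplication hypothesis to deduce $(saM:_RM)M=M$, and then invokes the Nakayama-type result \cite[Corollary 2.5]{AM69} (this is where finite generation enters) to get $(saM:_RM)=R$, hence $saM=M$ and the contradiction $aM=M$; the case $(sa)^tM=0$ is handled exactly as in your last paragraph. You instead isolate the cleaner key lemma that $sM=M$ for every $s\in S$, proved by applying the comultiplication property to $sM$ itself and testing the submodule $bM$ with $b\in Ann_R(sM)\setminus Ann_R(M)$ against the observation that no element of $S$ can annihilate a non-zero submodule; once $sM=M$ is known, the case $saM=sM$ gives $aM=M$ immediately. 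Your route buys two things: it bypasses the colon-submodule and Nakayama machinery entirely, and, as you note, it never uses finite generation, so it proves a slightly stronger statement (the hypothesis that $M$ is finitely generated can be dropped); the paper's route is the one that generalizes more directly to its companion result on $S$-primary submodules of multiplication modules (Theorem \ref{t32.10}), where the same Nakayama step reappears. Your preliminary observations and the treatment of the inclusion $\sqrt{Ann_R(M)}\subseteq W_R(M)$ are also fine.
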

\begin{proof}
Assume that every non-zero submodule of $M$ is an $S$-secondary submodule of $M$.
 Let $a \in W_R(M)$. Then $aM \not=M$. Since $M$ is $S$-secondary, there exists a fixed $s\in S$ such that $saM=sM$ or $(sa)^tM=0$ for some $t \in \Bbb N$. If $saM=sM$, then $s \in (saM:_RM)$. Now put $N=(0:_M(saM:_RM))$ and note that $s \in S \cap Ann_R(N) \not=\emptyset$. Thus, $N$ is not $S$-secondary and so by part (a), we have $N=(0:_M(saM:_RM))=0$. Now as $M$ is a comultiplication $R$-module, one can see that $M(saM:_RM)=M$. By  \cite[Corollary 2.5]{AM69}, $1 - x \in Ann_R(M) \subseteq (saM:_RM)$ for some $x \in (saM:_RM)$ since $M$ is a finitely generated $R$-module. This implies that $(saM:_RM)=R$  and so $saM=M$. It follows that $aM=M$, which is a contradiction. Therefore, $(sa)^tM=0$. Then $s \in \sqrt{Ann_R(a^tM)}$ and so  $S\cap \sqrt{Ann_R(a^tM)}\not = \emptyset$. Hence by assumption, $a^tM=0$. Thus, we get $W_R(M) = \sqrt{Ann_R(M)}$.
\end{proof}

\begin{defn}\label{d2.10}
Let $S$ be a  multiplicatively closed subset of $R$ and $P$ be a submodule of an $R$-module $M$ with $\sqrt{(P :_R M)} \cap S=\emptyset$. Then we say that $P$ is an \textit{$S$-primary submodule} if there exists a fixed $s\in S$ and whenever $am \in P$, then either $sa \in \sqrt{(P :_R M)}$ or $sm \in P$  for each $a \in R$ and $m \in M$.
\end{defn}

An $R$-module $M$ is said to be a
\emph{multiplication module} if for every submodule $N$ of $M$
there exists an ideal $I$ of $R$ such that $N=IM$ \cite{Ba81}.

\begin{thm}\label{t32.10}
Let $S$ be a  multiplicatively closed subset of $R$ and $M$ be a finitely generated multiplication $R$-module with $\sqrt{Ann_R(M)} \cap S=\emptyset$. If each proper submodule of $M$ is $S$-primary, then $Z_R(M)=\sqrt{Ann_R(M)}$.
\end{thm}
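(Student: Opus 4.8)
The plan is to follow the pattern of the proof of Theorem~\ref{t2.10}, with the triple (multiplication module, $S$-primary submodule, $Z_R(M)$) playing the role of the triple (comultiplication module, $S$-secondary submodule, $W_R(M)$). First I would dispose of the routine inclusion $\sqrt{Ann_R(M)}\subseteq Z_R(M)$: since $1\in S$ and $\sqrt{Ann_R(M)}\cap S=\emptyset$ we have $M\neq 0$, and if $a\in\sqrt{Ann_R(M)}$ with $n$ minimal such that $a^nM=0$, then choosing $m\in M$ with $a^{n-1}m\neq 0$ (any nonzero $m$ if $n=1$) gives $a(a^{n-1}m)=0$, so $a\in Z_R(M)$. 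The real content is the reverse inclusion $Z_R(M)\subseteq\sqrt{Ann_R(M)}$.

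The key preliminary fact I would establish is that \emph{$sM=M$ for every $s\in S$}. Indeed, if $sM\subsetneq M$ for some $s\in S$, then $sM$ is a proper submodule of $M$, hence $S$-primary by hypothesis; but Definition~\ref{d2.10} requires $\sqrt{(sM:_RM)}\cap S=\emptyset$ for an $S$-primary submodule, while $s\in(sM:_RM)\cap S$, a contradiction. Since $M$ is finitely generated, applying \cite[Corollary~2.5]{AM69} to $sM=M$ produces, for each $s\in S$, an element $r\in R$ with $(1-rs)M=0$; the same remark applied to $s^nM=M$ gives, for every $n$, an $r\in R$ with $(1-rs^n)M=0$.

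Now I would take $a\in Z_R(M)$ and fix $m\in M$ with $m\neq 0$ and $am=0$. Since $M\neq 0$, the zero submodule is proper, hence $S$-primary, and $\sqrt{(0:_RM)}=\sqrt{Ann_R(M)}$; so there is a fixed $s\in S$ such that $bx=0$ implies $sb\in\sqrt{Ann_R(M)}$ or $sx=0$ for all $b\in R$, $x\in M$. Taking $b=a$, $x=m$ gives $sa\in\sqrt{Ann_R(M)}$ or $sm=0$. The branch $sm=0$ cannot occur: choosing $r$ with $(1-rs)M=0$ yields $m=rsm+(1-rs)m=0$, a contradiction. Hence $sa\in\sqrt{Ann_R(M)}$, so $(sa)^nM=0$ for some $n\in\Bbb N$; choosing $r$ with $(1-rs^n)M=0$ we then get $a^nM=a^n\bigl(rs^n+(1-rs^n)\bigr)M=r(sa)^nM=0$, that is, $a\in\sqrt{Ann_R(M)}$.

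The step I expect to be the crux — and the only place where the hypotheses must really be used — is the invertibility fact $sM=M$ for $s\in S$; once that is available, both excluding the branch $sm=0$ and completing the computation are immediate. (Note that the argument above uses that $M$ is finitely generated but never uses the multiplication hypothesis as such; the latter is the natural dual of the comultiplication hypothesis appearing in Theorem~\ref{t2.10}.)
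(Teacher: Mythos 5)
Your proof is correct, and it rests on the same two ingredients as the paper's own argument: the observation that, under the hypothesis, any submodule whose colon ideal meets $S$ (even up to radical) cannot be proper, together with \cite[Corollary 2.5]{AM69} for the finitely generated module $M$. The difference is in how these are deployed. You isolate once and for all the lemma that $sM=M$, hence $(1-rs)M=0$ and $(1-rs^{n})M=0$, for every $s\in S$, and then both branches of the $S$-primary condition for the zero submodule are closed by a direct computation; the paper instead applies the non-properness trick to two ad hoc submodules, namely $Ann_R(\acute{m})M$ (to exclude the branch $s\acute{m}=0$) and $(0:_Ma)$ (to convert $sa\in Ann_R(M)$ into $aM=0$). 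Your organization buys two things. First, you prove the inclusion $\sqrt{Ann_R(M)}\subseteq Z_R(M)$ explicitly, which the paper leaves tacit. Second, you handle the radical correctly: Definition \ref{d2.10} only yields $sa\in\sqrt{Ann_R(M)}$, whereas the paper's written proof slips to $sa\in Ann_R(M)$; your passage from $(sa)^{n}M=0$ to $a^{n}M=0$ via $(1-rs^{n})M=0$ repairs exactly this point (the paper's route can be repaired analogously by using $(0:_Ma^{n})$ in place of $(0:_Ma)$, and its assertion that $S\cap\sqrt{(\acute{P}:_RM)}=\emptyset$ should read $\neq\emptyset$). Your closing remark is also consistent with the source: the paper's proof of this theorem likewise never uses the multiplication hypothesis, which only plays a role in Corollary \ref{c2.10}.
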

\begin{proof}
Let $a \in Z_R(M)$. Then there is a $0 \not= \acute{m} \in M$ with $a\acute{m} = 0$. Since the zero submodule is $S$-primary
and $a\acute{m} = 0$, there is a fixed $s \in S$ so that $sa \in Ann_R(M)$ or $s\acute{m} = 0$. If $s\acute{m} = 0$; we have $s\in Ann_R(\acute{m})$. Now put $\acute{P} = Ann_R(\acute{m})M$ and note that $S \cap \sqrt{(\acute{P} :_R M)}=\emptyset$. Thus, we have $\acute{P} = Ann_R(\acute{m})M = M$. By \cite[Corollary 2.5]{AM69}, $1 - x \in Ann_R(M) \subseteq Ann_R(\acute{m})$ for some $x \in Ann_R(\acute{m})$. This yields that $Ann_R(\acute{m}) = R$ and so
$\acute{m} = 0$; which is a contradiction. We have $sa \in Ann_R(M)$. Then we can conclude that $s \in ((0:_Ma):_R M)$
and hence by assumption $ (0:_Ma)= M$.  Thus, we get $a \in Ann_R(M)$. Therefore, $Z_R(M) =\sqrt{Ann_R(M)}$.
\end{proof}

\begin{cor}\label{c2.10}
Let $S$ be a  multiplicatively closed subset of $R$. If $M$ is a finitely generated multiplication and comultiplication $R$-module with $\sqrt{Ann_R(M)} \cap S=\emptyset$, then the following statements are equivalent:
\begin{itemize}
\item [(a)] Each non-zero submodule of $M$ is $S$-secondary;
\item [(b)]  $Z_R(M)=W_R(M) = \sqrt{Ann_R(M)}$;
\item [(c)] Each proper submodule of $M$ is an $S$-primary submodule of $M$.
\end{itemize}
\end{cor}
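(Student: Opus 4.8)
The plan is to prove the cycle $(a)\Rightarrow(b)$, $(c)\Rightarrow(b)$, $(b)\Rightarrow(a)$, $(b)\Rightarrow(c)$, which gives the stated equivalence. Two elementary facts will be used throughout. First, $\sqrt{Ann_R(M)}\cap S=\emptyset$ forces $M\neq 0$ (else $1\in\sqrt{Ann_R(M)}\cap S$). Second, since $M$ is finitely generated and non-zero one always has $\sqrt{Ann_R(M)}\subseteq Z_R(M)\subseteq W_R(M)$: if $a^nM=0$, pick $0\neq m\in M$ and the least $k\geq 1$ with $a^km=0$, so $a(a^{k-1}m)=0$ with $a^{k-1}m\neq 0$ and $a\in Z_R(M)$; and if $am=0$ with $m\neq 0$, then $aM=M$ would give, by \cite[Corollary 2.5]{AM69}, some $r$ with $(ar-1)M=0$, whence $m=arm=0$, a contradiction, so $a\in W_R(M)$. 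I will also use that $aM=M$ is equivalent to $a$ being a unit modulo $Ann_R(M)$, so that $W_R(M)$ is exactly the preimage in $R$ of the set of non-units of $\bar R:=R/Ann_R(M)$.

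Granting these, $(a)\Rightarrow(b)$ is immediate: under $(a)$ the hypotheses of Theorem~\ref{t2.10} hold, so $W_R(M)=\sqrt{Ann_R(M)}$, and the chain above collapses to $Z_R(M)=W_R(M)=\sqrt{Ann_R(M)}$. For $(c)\Rightarrow(b)$, Theorem~\ref{t32.10} yields $Z_R(M)=\sqrt{Ann_R(M)}$, so it suffices to prove $W_R(M)\subseteq Z_R(M)$, and this is the one place where the comultiplication hypothesis does real work. Let $a\in W_R(M)$, so $aM\neq M$; if we had $(0:_Ma)=0$, then any $r$ with $raM=0$ satisfies $rM\subseteq(0:_Ma)=0$, i.e. $(Ann_R(M):_Ra)=Ann_R(M)$, and since $M$ is comultiplication, $aM=(0:_MAnn_R(aM))=(0:_M(Ann_R(M):_Ra))=(0:_MAnn_R(M))=M$, a contradiction; hence $(0:_Ma)\neq 0$ and $a\in Z_R(M)$.

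For the two implications leaving $(b)$, the key remark is that $W_R(M)=\sqrt{Ann_R(M)}$ says precisely that every non-unit of $\bar R=R/Ann_R(M)$ is nilpotent, so $\bar R$ is quasilocal with maximal ideal equal to its nilradical, and the same holds for every non-zero quotient of $\bar R$. For $(b)\Rightarrow(a)$: let $N\neq 0$ be a submodule; if some $s\in S$ lay in $\sqrt{Ann_R(N)}$ then $s^nN=0$ gives $s\in Z_R(M)=\sqrt{Ann_R(M)}$, contradicting $\sqrt{Ann_R(M)}\cap S=\emptyset$, so the standing hypothesis of Definition~\ref{d2.1} holds; writing $N=IM$ (as $M$ is multiplication), for each $r\in R$ either $rM=M$ and $rN=I(rM)=N$, or $rM\neq M$, so $r\in\sqrt{Ann_R(M)}$ and $r^tN\subseteq r^tM=0$ for some $t$, so $N$ is $S$-secondary, witnessed by $s=1$. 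For $(b)\Rightarrow(c)$: a proper submodule $P$ equals $(P:_RM)M$, and $M/P$ is again finitely generated multiplication with annihilator $(P:_RM)\supseteq Ann_R(M)$, proper since $P\neq M$; hence $R/(P:_RM)$ is a non-zero quotient of $\bar R$, so quasilocal with nil maximal ideal, and one checks $\sqrt{(P:_RM)}\cap S=\emptyset$ exactly as before. If $am\in P$ with $m\notin P$, then $a$ is a zero-divisor on $M/P$, so its image in $R/(P:_RM)$ is a non-unit, hence nilpotent, i.e. $a\in\sqrt{(P:_RM)}$; thus $P$ is $S$-primary, witnessed by $s=1$.

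I expect the main obstacle to be the half of $(c)\Rightarrow(b)$ that upgrades $Z_R(M)=\sqrt{Ann_R(M)}$ to $W_R(M)=\sqrt{Ann_R(M)}$, namely the passage from $aM\neq M$ to $(0:_Ma)\neq 0$: this is the only point requiring an essential use of the comultiplication property, whereas the rest of the argument is either handed over directly by Theorems~\ref{t2.10} and \ref{t32.10} or is a routine unwinding of the definitions once one observes that $W_R(M)=\sqrt{Ann_R(M)}$ forces $R/Ann_R(M)$ to be quasilocal with nilpotent maximal ideal.
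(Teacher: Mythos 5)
Your proposal is correct, and it is genuinely more complete than the paper's own proof, which disposes of the corollary in two lines: it asserts $Z_R(M)=W_R(M)$ because $M$ is multiplication and comultiplication, and then cites Theorems~\ref{t2.10} and \ref{t32.10} --- but those theorems only furnish (a)$\Rightarrow$(b) and (c)$\Rightarrow$(b), so the converse directions are left entirely implicit. You differ in three places. For (a)$\Rightarrow$(b) you replace the unproved equality $Z_R(M)=W_R(M)$ by the elementary chain $\sqrt{Ann_R(M)}\subseteq Z_R(M)\subseteq W_R(M)$ (valid since $M$ is finitely generated and non-zero, via \cite[Corollary 2.5]{AM69}). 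For (c)$\Rightarrow$(b) you actually prove the needed inclusion $W_R(M)\subseteq Z_R(M)$ from the comultiplication hypothesis: if $(0:_Ma)=0$ then $Ann_R(aM)=Ann_R(M)$, whence $aM=(0:_MAnn_R(aM))=M$. Most importantly, you supply real proofs of (b)$\Rightarrow$(a) and (b)$\Rightarrow$(c), by noting that $W_R(M)=\sqrt{Ann_R(M)}$ makes $R/Ann_R(M)$ quasilocal with nil maximal ideal and then using the multiplication structure ($N=IM$, respectively the ideal $(P:_RM)$) to verify the secondary, resp. primary, conditions with witness $s=1$; the step from zero-divisor on $M/P$ to non-unit, hence nilpotent, modulo $(P:_RM)$ is sound because units of $R/Ann_R(M)$ map onto units of $R/(P:_RM)$. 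What your route buys is a self-contained equivalence in which the use of each hypothesis is visible (comultiplication only for Theorem~\ref{t2.10} and $W\subseteq Z$; multiplication only for Theorem~\ref{t32.10} and the two converses), whereas the paper's route is shorter but rests on the unjustified reverse implications and the unproved claim $Z_R(M)=W_R(M)$. One by-product you should flag: for $M=\Bbb Z_{p^n}$ and $S=\Bbb Z\setminus p\Bbb Z$ one has $Z_R(M)=W_R(M)=\sqrt{Ann_R(M)}=p\Bbb Z$, so condition (b) holds, and your (b)$\Rightarrow$(a), (b)$\Rightarrow$(c) then show every non-zero submodule is $S$-secondary and every proper submodule is $S$-primary --- consistent with the paper's earlier verification that $\Bbb Z_4$ is $S$-secondary, but contradicting the conclusion drawn in Example~\ref{e2.10}; the defect is in that example, not in your argument.
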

\begin{proof}
As $M$ is a multiplication and comultiplication $R$-module, one can see that  $Z_R(M)=W_R(M)$. Now the results follows from Theorem \ref{t2.10} and  Theorem \ref{t32.10}.
\end{proof}

\begin{ex}\label{e2.10}
For any prime integer $p$ and any positive integer $n\geq 2$, consider the $\Bbb Z$-module $\Bbb Z_{p^n}$. Take $S=\Bbb Z -p\Bbb Z$. We know that $\Bbb Z_{p^n}$ is a finitely generated multiplication and comultiplication $\Bbb Z$-module. Then by Corollary \ref{c2.10}, the $\Bbb Z$-module $\Bbb Z_{p^n}$ has a non-zero submodule which is not $S$-secondary and a proper submodule which is not $S$-primary.
\end{ex}
\bibliographystyle{amsplain}

\end{document}